\tikzstyle arrowstyle=[scale=1]
\tikzstyle directed=[postaction={decorate,
decoration={markings,mark=at position .65 with {\arrow[arrowstyle]{stealth}}}}]
\tikzset{mid vert/.style={/utils/exec=\tikzset{every node/.append style={outer sep=0.8ex}},
postaction=decorate,decoration={markings,
mark=at position 0.5 with {\draw[-] (0,#1) -- (0,-#1);}}},
mid vert/.default=0.75ex}
\begin{document}

\title 
[Surprising order structures in mathematics]
{Surprising occurrences of order structures in mathematics}

\author{Gunnar Fl{\o}ystad}
\address{Matematisk institutt\\
         Postboks 7803\\
         5020 Bergen}
\email{gunnar@mi.uib.no}



       \keywords{Preorder, poset, directed graph, category, topology,
       associative algeba, matrix group, polynomial ring, bipartite graph}
     \subjclass[2020]{Primary: 06-02; Secondary: 06A06, 13F55, 13F20, 16G99,
       20G07, 54A99}
\date{\today}



\theoremstyle{plain}
\newtheorem{theorem}{Theorem}[section]
\newtheorem{corollary}[theorem]{Corollary}
\newtheorem*{main}{Main Theorem}
\newtheorem{lemma}[theorem]{Lemma}
\newtheorem{proposition}[theorem]{Proposition}
\newtheorem{conjecture}[theorem]{Conjecture}
\newtheorem{theoremp}{Theorem}

\theoremstyle{definition}
\newtheorem{definition}[theorem]{Definition}
\newtheorem{fact}[theorem]{Fact}
\newtheorem{obs}[theorem]{Observation}
\newtheorem{definisjon}[theorem]{Definisjon}
\newtheorem{problem}[theorem]{Problem}
\newtheorem{condition}[theorem]{Condition}

\theoremstyle{remark}
\newtheorem{notation}[theorem]{Notation}
\newtheorem{remark}[theorem]{Remark}
\newtheorem{example}[theorem]{Example}
\newtheorem{claim}{Claim}
\newtheorem{observation}[theorem]{Observation}
\newtheorem{question}[theorem]{Question}


\newcommand{\psp}[1]{{{\bf P}^{#1}}}
\newcommand{\psr}[1]{{\bf P}(#1)}
\newcommand{\op}{{\mathcal O}}
\newcommand{\opw}{\op_{\psr{W}}}

\newcommand{\ini}[1]{\text{in}(#1)}
\newcommand{\gin}[1]{\text{gin}(#1)}
\newcommand{\kr}{{\Bbbk}}
\newcommand{\pd}{\partial}
\newcommand{\vardel}{\partial}


\newcommand{\coh}{{{\text{{\rm coh}}}}}


\newcommand{\modv}[1]{{#1}\text{-{mod}}}
\newcommand{\modstab}[1]{{#1}-\underline{\text{mod}}}

\newcommand{\sut}{{}^{\tau}}
\newcommand{\sumit}{{}^{-\tau}}
\newcommand{\til}{\thicksim}

\newcommand{\totp}{\text{Tot}^{\prod}}
\newcommand{\dsum}{\bigoplus}
\newcommand{\dprod}{\prod}
\newcommand{\lsum}{\oplus}
\newcommand{\lprod}{\Pi}

\newcommand{\La}{{\Lambda}}

\newcommand{\sirstj}{\circledast}

\newcommand{\she}{\EuScript{S}\text{h}}
\newcommand{\cm}{\EuScript{CM}}
\newcommand{\cmd}{\EuScript{CM}^\dagger}
\newcommand{\cmri}{\EuScript{CM}^\circ}
\newcommand{\cler}{\EuScript{CL}}
\newcommand{\clerd}{\EuScript{CL}^\dagger}
\newcommand{\clerri}{\EuScript{CL}^\circ}
\newcommand{\gor}{\EuScript{G}}
\newcommand{\cF}{\mathcal{F}}
\newcommand{\cG}{\mathcal{G}}
\newcommand{\cM}{\mathcal{M}}
\newcommand{\cE}{\mathcal{E}}
\newcommand{\cI}{\mathcal{I}}
\newcommand{\cP}{\mathcal{P}}
\newcommand{\cK}{\mathcal{K}}
\newcommand{\cS}{\mathcal{S}}
\newcommand{\cC}{\mathcal{C}}
\renewcommand{\cD}{\mathcal{D}}
\newcommand{\cO}{\mathcal{O}}
\newcommand{\cJ}{\mathcal{J}}
\newcommand{\cU}{\mathcal{U}}
\newcommand{\cQ}{\mathcal{Q}}
\newcommand{\cX}{\mathcal{X}}
\newcommand{\cY}{\mathcal{Y}}
\newcommand{\cZ}{\mathcal{Z}}
\newcommand{\cV}{\mathcal{V}}

\newcommand{\mm}{\mathfrak{m}}

\newcommand{\dlim} {\varinjlim}
\newcommand{\ilim} {\varprojlim}

\newcommand{\CM}{\text{CM}}
\newcommand{\Mon}{\text{Mon}}


\newcommand{\Kom}{\text{Kom}}


\newcommand{\EH}{{\mathbf H}}
\newcommand{\res}{\text{res}}
\newcommand{\Hom}{\text{Hom}}
\newcommand{\inhom}{{\underline{\text{Hom}}}}
\newcommand{\Ext}{\text{Ext}}
\newcommand{\Tor}{\text{Tor}}
\newcommand{\ghom}{\mathcal{H}om}
\newcommand{\gext}{\mathcal{E}xt}
\newcommand{\id}{\text{{id}}}
\newcommand{\im}{\text{im}\,}
\newcommand{\codim} {\text{codim}\,}
\newcommand{\resol}{\text{resol}\,}
\newcommand{\rank}{\text{rank}\,}
\newcommand{\lpd}{\text{lpd}\,}
\newcommand{\coker}{\text{coker}\,}
\newcommand{\supp}{\text{supp}\,}
\newcommand{\Ad}{A_\cdot}
\newcommand{\Bd}{B_\cdot}
\newcommand{\Fd}{F_\cdot}
\newcommand{\Gd}{G_\cdot}


\newcommand{\sus}{\subseteq}
\newcommand{\sups}{\supseteq}
\newcommand{\pil}{\rightarrow}
\newcommand{\vpil}{\leftarrow}
\newcommand{\rpil}{\leftarrow}
\newcommand{\lpil}{\longrightarrow}
\newcommand{\inpil}{\hookrightarrow}
\newcommand{\pils}{\twoheadrightarrow}
\newcommand{\projpil}{\dashrightarrow}
\newcommand{\dotpil}{\dashrightarrow}
\newcommand{\adj}[2]{\overset{#1}{\underset{#2}{\rightleftarrows}}}
\newcommand{\mto}[1]{\stackrel{#1}\longrightarrow}
\newcommand{\vmto}[1]{\stackrel{#1}\longleftarrow}
\newcommand{\mtoelm}[1]{\stackrel{#1}\mapsto}
\newcommand{\bihom}[2]{\overset{#1}{\underset{#2}{\rightleftarrows}}}
\newcommand{\eqv}{\Leftrightarrow}
\newcommand{\impl}{\Rightarrow}

\newcommand{\iso}{\cong}
\newcommand{\te}{\otimes}
\newcommand{\into}[1]{\hookrightarrow{#1}}
\newcommand{\ekv}{\Leftrightarrow}
\newcommand{\equi}{\simeq}
\newcommand{\isopil}{\overset{\cong}{\lpil}}
\newcommand{\equipil}{\overset{\equi}{\lpil}}
\newcommand{\ispil}{\isopil}
\newcommand{\vvi}{\langle}
\newcommand{\hvi}{\rangle}
\newcommand{\susneq}{\subsetneq}
\newcommand{\sgn}{\text{sign}}


\newcommand{\xd}{\check{x}}
\newcommand{\ortog}{\bot}
\newcommand{\tL}{\tilde{L}}
\newcommand{\tM}{\tilde{M}}
\newcommand{\tH}{\tilde{H}}
\newcommand{\tvH}{\widetilde{H}}
\newcommand{\tvh}{\widetilde{h}}
\newcommand{\tV}{\tilde{V}}
\newcommand{\tS}{\tilde{S}}
\newcommand{\tT}{\tilde{T}}
\newcommand{\tR}{\tilde{R}}
\newcommand{\tf}{\tilde{f}}
\newcommand{\ts}{\tilde{s}}
\newcommand{\tp}{\tilde{p}}
\newcommand{\tr}{\tilde{r}}
\newcommand{\tfst}{\tilde{f}_*}
\newcommand{\empt}{\emptyset}
\newcommand{\bfa}{{\mathbf a}}
\newcommand{\bfb}{{\mathbf b}}
\newcommand{\bfd}{{\mathbf d}}
\newcommand{\bfl}{{\mathbf \ell}}
\newcommand{\bfx}{{\mathbf x}}
\newcommand{\bfm}{{\mathbf m}}
\newcommand{\bfv}{{\mathbf v}}
\newcommand{\bft}{{\mathbf t}}
\newcommand{\bbfa}{{\mathbf a}^\prime}
\newcommand{\la}{\lambda}
\newcommand{\bfen}{{\mathbf 1}}
\newcommand{\bfe}{{\mathbf 1}}
\newcommand{\ep}{\epsilon}
\newcommand{\en}{r}
\newcommand{\tu}{s}
\newcommand{\Sym}{\text{Sym}}

\newcommand{\ome}{\omega_E}

\newcommand{\bevis}{{\bf Proof. }}
\newcommand{\demofin}{\qed \vskip 3.5mm}
\newcommand{\nyp}[1]{\noindent {\bf (#1)}}
\newcommand{\demo}{{\it Proof. }}
\newcommand{\demodone}{\demofin}
\newcommand{\parg}{{\vskip 2mm \addtocounter{theorem}{1}  
                   \noindent {\bf \thetheorem .} \hskip 1.5mm }}

\newcommand{\lcm}{{\text{lcm}}}


\newcommand{\dl}{\Delta}
\newcommand{\cdel}{{C\Delta}}
\newcommand{\cdelp}{{C\Delta^{\prime}}}
\newcommand{\dlst}{\Delta^*}
\newcommand{\Sdl}{{\mathcal S}_{\dl}}
\newcommand{\lk}{\text{lk}}
\newcommand{\lkd}{\lk_\Delta}
\newcommand{\lkp}[2]{\lk_{#1} {#2}}
\newcommand{\del}{\Delta}
\newcommand{\delr}{\Delta_{-R}}
\newcommand{\dd}{{\dim \del}}
\newcommand{\Del}{\Delta}

\renewcommand{\aa}{{\bf a}}
\newcommand{\bb}{{\bf b}}
\newcommand{\cc}{{\bf c}}
\newcommand{\xx}{{\bf x}}
\newcommand{\yy}{{\bf y}}
\newcommand{\zz}{{\bf z}}
\newcommand{\mv}{{\xx^{\aa_v}}}
\newcommand{\mF}{{\xx^{\aa_F}}}

\newcommand{\Symm}{\text{Sym}}
\newcommand{\pnm}{{\bf P}^{n-1}}
\newcommand{\opnm}{{\go_{\pnm}}}
\newcommand{\ompnm}{\omega_{\pnm}}

\newcommand{\pn}{{\bf P}^n}
\newcommand{\hele}{{\mathbb Z}}
\newcommand{\nat}{{\mathbb N}}
\newcommand{\rasj}{{\mathbb Q}}
\newcommand{\bfone}{{\mathbf 1}}

\newcommand{\dt}{\bullet}
\newcommand{\disk}{\scriptscriptstyle{\bullet}}

\newcommand{\cxF}{F_\dt}
\newcommand{\pol}{f}

\newcommand{\Rn}{{\mathbb R}^n}
\newcommand{\An}{{\mathbb A}^n}
\newcommand{\frg}{\mathfrak{g}}
\newcommand{\PW}{{\mathbb P}(W)}

\newcommand{\pos}{{\mathcal Pos}}
\newcommand{\g}{{\gamma}}

\newcommand{\Vaa}{V_0}
\newcommand{\Bp}{B^\prime}
\newcommand{\Bpp}{B^{\prime \prime}}
\newcommand{\bbp}{\mathbf{b}^\prime}
\newcommand{\bbpp}{\mathbf{b}^{\prime \prime}}
\newcommand{\bp}{{b}^\prime}
\newcommand{\bpp}{{b}^{\prime \prime}}

\newcommand{\oLa}{\overline{\Lambda}}
\newcommand{\ov}[1]{\overline{#1}}
\newcommand{\ovv}[1]{\overline{\overline{#1}}}
\newcommand{\tm}{\tilde{m}}
\newcommand{\po}{\bullet}

\newcommand{\surj}[1]{\overset{#1}{\twoheadrightarrow}}
\newcommand{\Supp}{\text{Supp}}

\def\CC{{\mathbb C}}
\def\GG{{\mathbb G}}
\def\ZZ{{\mathbb Z}}
\def\NN{{\mathbb N}}
\def\RR{{\mathbb R}}
\def\OO{{\mathbb O}}
\def\QQ{{\mathbb Q}}
\def\VV{{\mathbb V}}
\def\PP{{\mathbb P}}
\def\EE{{\mathbb E}}
\def\FF{{\mathbb F}}
\def\AA{{\mathbb A}}

\newcommand{\oR}{\overline{R}}
\newcommand{\bfu}{{\mathbf u}}
\newcommand{\nn}{{\mathbf n}}
\newcommand{\oa}{\overline{a}}
\newcommand{\cop}{\text{cop}}
\renewcommand{\op}{\text{op}\,}
\renewcommand{\mm}{{\mathbf m}}
\newcommand{\ngmi}{\text{neg}}
\newcommand{\up}{\text{up}}
\newcommand{\dw}{\text{down}}
\newcommand{\diw}[1]{\widehat{#1}}
\newcommand{\di}[1]{\diw{#1}}
\newcommand{\bo}{b}
\newcommand{\ub}{u}
\newcommand{\fs}{\infty}
\newcommand{\ifst}{\infty}
\newcommand{\mon}{{mon}}
\newcommand{\cl}{\text{cl}}
\newcommand{\intr}{\text{int}}
\newcommand{\ul}[1]{\underline{#1}}
\renewcommand{\ov}[1]{\overline{#1}}
\newcommand{\bipil}{\leftrightarrow}
\newcommand{\bfc}{{\mathbf c}}
\renewcommand{\mp}{m^\prime}
\newcommand{\np}{n^\prime}
\newcommand{\Mod}{\text{Mod }}
\newcommand{\Sh}{\text{Sh } }
\newcommand{\st}{\text{st}}
\newcommand{\hM}{\tilde{M}}
\newcommand{\hs}{\tilde{s}}
\newcommand{\ee}{\mathbf{e}}
\renewcommand{\dd}{\mathbf{d}}
\renewcommand{\en}{{\mathbf 1}}
\long\def\ignore#1{}
\newcommand{\lex}{{\text{lex}}}
\newcommand{\ordGL}{\succeq_{\lex}}
\newcommand{\ordG}{\succ_{\lex}}
\newcommand{\ordML}{\preceq_{\lex}}
\newcommand{\ordM}{\prec_{\lex}}
\newcommand{\tLa}{\tilde{\Lambda}}
\newcommand{\tGa}{\tilde{\Gamma}}
\newcommand{\STS}{\text{STS}}
\newcommand{\ii}{{\rotatebox[origin=c]{180}{\scalebox{0.7}{\rm{!}}}}}
\newcommand{\jj}{\mathbf{j}}
\renewcommand{\mod}{\text{ mod}\,}
\newcommand{\shmod}{\texttt{shmod}\,}
\newcommand{\hf}{\underline{f}}
\newcommand{\Glim}{\lim}
\newcommand{\Gcolim}{\colim}
\newcommand{\fm}{f^{\underline{m}}}
\newcommand{\fn}{f^{\underline{n}}}
\newcommand{\gn}{g_{\mathbf{|}n}}
\newcommand{\se}[1]{\overline{#1}}
\newcommand{\cB}{{\mathcal{B}}}
\newcommand{\fin}{{\text{fin}}}
\newcommand{\Poset}{{\text{\bf Poset}}}
\newcommand{\Set}{{\text{\bf Set}}}
\newcommand{\Homi}{\text{Hom}^{L}}
\newcommand{\Homb}{\text{Hom}_S}
\newcommand{\Homu}{\text{Hom}^u}
\newcommand{\bfnu}{{\mathbf 0}}
\renewcommand{\bfen}{{\mathbf 1}}
\newcommand{\semip}{up semi-finite }
\newcommand{\semim}{down semi-finite }
\newcommand{\dif}[1]{\di{#1}_{fin}}
\newcommand{\llin}{\raisebox{1pt}{\scalebox{1}[0.6]{$\mid$}}}
\newcommand{\promap}{\mathrlap{{\hskip 2.8mm}{\llin}}{\lpil}}
\newcommand{\dual}{{\widehat{}}}
\newcommand{\Bool}{{\bf Bool }}
\newcommand{\pro}{{pro}}
\newcommand{\ovF}{\overline{F}}
\newcommand{\TO}{{\mathrm to}}
\newcommand{\dT}{\overset{\rightarrow}{T}}
\def\cl{\overline}

\newlength{\dhatheight}
\newcommand{\doublehat}[1]{%
    \settoheight{\dhatheight}{\ensuremath{\widehat{#1}}}%
    \addtolength{\dhatheight}{-0.35ex}%
    \widehat{\vphantom{\rule{1pt}{\dhatheight}}%
    \smash{\widehat{#1}}}}

\newcommand{\napo}{natural }

\newcommand{\colim}{\text{colim}}
\newcommand{\oPsi}{\overline{\Psi}}
\newcommand{\bfC}{{\mathbf C}}
\newcommand{\ic}{{\mathfrak c}}
\newcommand{\iC}{{\mathfrak C}}
\newcommand{\set}{{\bf set}}

\newcommand{\ux}{U_x}
\newcommand{\uy}{U_y}
\newcommand{\glv}{GL(V)}
\newcommand{\gln}{GL(n)}
\newcommand{\glx}{GL(X)}
\newcommand{\glxo}[1]{GL_{\geq_{#1}}(X)}
\newcommand{\cT}{{\mathcal T}}
\newcommand{\ben}{{1}}
\newcommand{\oX}{\overline{X}}
\newcommand{\Pre}{\text{Pre}}
\newcommand{\pre}{\text{pre}}
\newcommand{\kQ}{k\langle Q \rangle}
\newcommand{\preo}{{\tt{pre}}}
\newcommand{\Preo}{{\tt{Preorder}}}
\newcommand{\Top}{\text{Top}}
\newcommand{\topt}{{\tt{t}}}
\newcommand{\Digraph}{{\tt{Digraph}}}
\newcommand{\smCat}{{\tt{smallCat}}}
\newcommand{\dig}{{\tt{dig}}}
\newcommand{\path}{{\tt{path}}}
\newcommand{\inc}{{\tt{inc}}}
\newcommand{\bpreceq}{{\mathbf {\preceq}}}
\newcommand{\bsucceq}{{\mathbf {\succeq}}}

\begin{abstract}
Order and symmetry are main structural principles in
mathematics. We give five examples where on the face of it order is
not apparent, but deeper investigations reveal that they are governed
by order structures. These examples are finite topologies, associative
algebras, subgroups of matrix groups, ideals in polynomial rings,
and classes of bipartite graphs.
\end{abstract}

\maketitle


\newcommand{\topre}{
 \begin{tikzpicture}[scale=0.7, vertices/.style={draw, fill=black, circle, inner sep=1.5pt}]
\draw [help lines, white] (-1,0) grid (1,1);

\node (1x) at (0-2,0) {x.};
\node [vertices] (1) at (0-1.4,0) {};
\node [vertices] (2) at (0.4-1.4,0) {};


\node (a) at (0.4,0) {y.};
\node [vertices] (a) at (1,0) {};
\node [vertices] (b) at (1,0.7) {};

\foreach \to/\from in {a/b}
\draw (\to)--(\from);


\node (ahop) at (2.8, 0) {z.};
\node (aho) at (2+1.5,0.2) {};
\draw (aho) circle (0.3);
\filldraw[gray,thin] (1.9+1.5,0.1) circle (0.1);
\filldraw[gray,thin] (2.1+1.5,0.3) circle (0.1);


\end{tikzpicture}
}


\newcommand{\trepre}{
 \begin{tikzpicture}[scale=0.7, vertices/.style={draw, fill=black, circle, inner sep=1.5pt}]
\draw [help lines, white] (-1,0) grid (1,1);

\node (11a) at (-0.6,0) {a.};
\node [vertices] (11) at (0,0) {};
\node [vertices] (12) at (0.4,0) {};
\node [vertices] (13) at (0.8,0) {};

\node (21b) at (2.8-0.8,0) {b.};
\node [vertices] (21) at (2.8-0.2,0) {};
\node [vertices] (22) at (2.8-0.2,0.6) {};
\node [vertices] (23) at (3.2-0.2,0) {};

\draw (21)--(22);

\node (31c) at (5.2-1,0) {c.};
\node [vertices] (31) at (5.2-0.4,0) {};
\node [vertices] (32) at (5.0-0.4,0.5) {};
\node [vertices] (33) at (5.4-0.4,0.5) {};

\draw (31)--(32);
\draw (31)--(33);

+1.8
\node (31c) at (5.2-1+1.8,0) {d.};
\node [vertices] (31) at (5.2-0.4+1.8,0.5) {};
\node [vertices] (32) at (5.0-0.4+1.8,0) {};
\node [vertices] (33) at (5.4-0.4+1.8,0) {};

\draw (31)--(32);
\draw (31)--(33);

\node (41d) at (6+1.8,0) {e.};
\node [vertices] (41) at (6.8+1.8,0) {};
\node [vertices] (42) at (6.8+1.8,0.5) {};
\node [vertices] (43) at (6.8+1.8,1) {};

\draw (41)--(42);
\draw (42)--(43);


\node (ahoe) at (7.8+1.8,0.0) {f.};
\node (aho) at (8.6+1.8,0.1) {};
\draw (aho) circle (0.3);
\filldraw[gray,thin] (8.5+1.8,0) circle (0.1);
\filldraw[gray,thin] (8.7+1.8,0.2) circle (0.1);
\node [vertices] (53) at (9.2+1.8,0) {};

\node (ahof) at (10.2+1.8,0.0) {g.};
\node (aho) at (11+1.8,0.1) {};
\node (6v) at (11+1.8,0.2) {};
\draw (aho) circle (0.3);
\filldraw[gray,thin] (10.9+1.8,0) circle (0.1);
\filldraw[gray,thin] (11.1+1.8,0.2) circle (0.1);
\node [vertices] (63) at (11+1.8,0.9) {};
\draw (6v)--(63);

\node (ahog) at (12+1.8,0.0) {h.};
\node (aho) at (12.8+1.8,0.9) {};
\node (7v) at (12.8+1.8,0.8) {};
\draw (aho) circle (0.3);
\filldraw[gray,thin] (12.7+1.8,0.8) circle (0.1);
\filldraw[gray,thin] (12.9+1.8,1) circle (0.1);
\node [vertices] (71) at (12.8+1.8,0) {};
\draw (71)--(7v);

\node (ahoh) at (13.8+1.8,0.0) {i.};
\node (aho) at (14.6+1.8,0.2) {};
\draw (aho) circle (0.3);
\filldraw[gray,thin] (14.6+1.8,0.3) circle (0.1);
\filldraw[gray,thin] (14.5+1.8,0.1) circle (0.1);
\filldraw[gray,thin] (14.7+1.8,0.1) circle (0.1);

\end{tikzpicture}
}

\newcommand{\digraph}{
 \begin{tikzpicture}[scale=1, vertices/.style={draw, fill=black, circle, inner sep=1.5pt}]
   \draw [help lines, white] (-1,0) grid (1,1);
   \node (a) at (0,0) {$a$};
   \node (b) at (1.5,1) {$b$};
   \node (c) at (3,0) {$c$};

   \draw (a) circle (0.17);
   \draw (b) circle (0.17);
   \draw (c) circle (0.17);

   \path[->] (a) edge [bend left] node [above] {e} (b);
   \path[->] (a) edge [bend right] node [right] {f} (b);
   \path[->] (b) edge node [above] {g} (c);
   \path[->] (a) edge node [below] {h} (c);
 \end{tikzpicture}
   }

   \newcommand{\bub}[1]{
 \begin{tikzpicture}[scale=0.7, vertices/.style={draw, fill=black, circle, inner sep=1.5pt}]
     \node (b) at (0,0) {#1};
     \draw (b) circle (0.5);
   \end{tikzpicture}
 }

    \newcommand{\linegraph}{
      \begin{tikzpicture}[scale=1, vertices/.style={draw, fill=black, circle, inner sep=1.5pt}]
        
  \node (al) at (0,0.3) {$a$};
   \node (bl) at (1,0.3) {$b$};
   \node (cl) at (2,0.3) {$c$};
   \node (dl) at (3,0.3) {$d$};
        
   \node [vertices] (a) at (0,0) {};
   \node  [vertices] (b) at (1,0) {};
   \node  [vertices] (c) at (2,0) {};
   \node  [vertices] (d) at (3,0) {};
   

\draw (a)--(b)--(c)--(d);   
   \end{tikzpicture}
 }

    \newcommand{\squaregraph}{
      \begin{tikzpicture}[scale=0.7, vertices/.style={draw, fill=black, circle, inner sep=1.5pt}]

 \node (al) at (-0.4,1.2) {$a$};
   \node (bl) at (1.3,1.2) {$b$};
   \node (cl) at (1.3,0.0) {$c$};
   \node (dl) at (-0.4,0.0) {$d$};        
       
   \node [vertices](a) at (0,1) {};
   \node [vertices](b) at (1,1) {};
   \node [vertices](c) at (1,0) {};
   \node [vertices](d) at (0,0) {};
   

\draw (a)--(b)--(c)--(d)--(a);   
   \end{tikzpicture}
 }

\setcounter{tocdepth}{1} 
\tableofcontents

\section{Introduction}

Proving a mathematical statement is to give an argument assuming
simpler mathematical statements. These statements are again proven from
even simpler ones. So it goes on, until reaching the basis, the axioms.

Classifying and unravelling mathematical structure should be to understand
it in terms of simpler structures. For instance, how it is
built up or understood in terms of groups, partial orders, graphs and so on.
Of course, these structures are also complicated, and understanding and
classifying subclasses of such provides in itself deep research into
structure.

Given two objects $a$ and $b$, the simplest way they can relate is
that one "precedes" the other, or that there is a ``direction'' from one
to the other, say $ab$ or the other way $ba$.
They could also be unrelated, or we could allow both $ab$ and $ba$ to
occur. This most basic of ways to relate should be 
fundamental. If $V$ is a set of objects,
the simple mathematical notion for it is a relation $R \sus V \times V$.
It may also be reasonable to have the trivial $a$ precedes $a$, and
that the notion composes:
Given $ab$ and $bc$ one has $ac$.
The relation $R$ is then a {\it preorder}.

One may enrich and allow $ab$, that is $a$ precedes $b$, to occur in several
ways. This gives a set of such, $\Hom(a,b)$, for each ordered pair $(a,b)$.
Equivalently this is a map $E \pil V \times V$ with $\Hom(a,b)$ the
inverse image of $(a,b)$. This structure is a directed graph.
Again, if one would like to compose ordered ways, this gives a
{\it category}. 



We give five examples of well-known and richly studied structures which
on the face of it do not involve order structures, but deeper investigations
reveal that order structures classify or describe essential features
of them.

\medskip
\noindent{\sc Topologies.} Consider topologies on a finite set $X$.
For topologists is is well-known that giving such a topology is equivalent
to give a preorder on $X$, but this may not be so well-known outside that
community.

\medskip
\noindent{\sc Associative algebras.} Associative algebras make a large class.
For a field $k$, the {\it free} $k$-algebra on a set $X$ has basis
all words on the set $X$. A word is a sequence of elements of $X$ and
as such ordered. Going to the opposite in size, we consider the class
of associative $k$-algebras which are finite-dimensional as $k$-vector spaces.
Essential in classifying them are directed graphs, or quivers as they
are called in that community. The associated category and path algebra,
together with relations give essentially all associative $k$-algebras
which are finite-dimensional.

\medskip
\noindent {\sc Subgroups of matrix groups.} The general linear group
$GL_k(n)$ is the group of invertible $n \times n$-matrices with entries
in the field $k$. We consider subgroups of this containing the diagonal
matrices. Surprisingly, and maybe not well-known, connected such
subgroups are precisely classified by preorders on a set with
$n$ elements.

\medskip
\noindent {\sc Ideals in polynomial rings.} These are the classical
objects in algebra and the fundamental algebraic objects in algebraic geometry,
as their zero set gives varieties. A polynomial ring is commutative and
so in contrast to associative algebras, there is no hint of order
relation.

Polynomial ideals may be degenerated to monomial ideals. And monomial
ideals may be degenerated to even simpler types of monomial ideals.
But at the bottom there is a class of monomial ideals which are
as degenerate as possible. Which are these?
To any monomial ideal is associated a preorder. This is a simple
observation which in its explicit form may be new here. And the monomial
ideal is maximally degenerate iff the associate preorder is a total preorder.

\medskip
\noindent {\sc Bipartite graphs.} To a combinatorial object one may often
associate an algebraic object. Thus, one can use the machine and notions
of algebra to investigate combinatorial objects.
To simplicial complexes one may for instance associate Stanley-Reisner
ideals, to graphs one may associate edge ideals in polynomial rings.

A basic niceness condition on commutative rings is that of being
Cohen-Macaulay. For graphs it is as difficult question as anything to
to give a general description of when a graph has Cohen-Macaulay edge ring.
But for bipartite graphs this is completely understood. An amazing
result of J.Herzog and T.Hibi \cite{HeHi-bip} says that this holds
if and only if the bipartite graphs are constructed from a partial order.

\medskip
The organization of this article is as follows. Section \ref{sec:cat}
recalls the basic language of categories and shows that the simplest
of such are monoids, groups, and preorders. Section \ref{sec:digraph}
gives the even more
basic order structure of digraphs and how they induce categories and
preorders. Sections 4, 5, 6, 7, 8 consider respectively topologies, associative
algebras, matrix groups, ideals in polynomial rings, and bipartite graphs.
Appendix A recalls the notions of functors and adjoint functors. 

\medskip
\noindent {\it Acknowledgements.} We thank Hugh Thomas for pointing
out some inacurrancies in the section Associative algebra, in a
previous version. We also thank an anonymous referee for general
suggestions for improving the article.

\section{Categories, groups, and preorders}
\label{sec:cat}

Category theory started with the 1945 article ``A general theory of
natural equivalences'' \cite{EiML} by Samuel Eilenberg and Saunders MacLane.
They defined the noitions of category, functors, and natural transformations
to get clear, abstract and pervasive notions to describe a variety
of settings and workings in mathematics. Category theory really
started to gain a life of its own from the late 1950's,
especially with notion of adjoint
functors (see the appendix) introduced by Daniel Kan in 1958.  
The language of category theory has now advanced into most domains
of pure mathematics. Since the 1990's it has also been 
comprehensively in use in theoretical computer
science. In the last 10-15 years Applied Category Theory has surfaced
\cite{ACT}. For instance, categorical notions are realized to be natural
in database and network modeling.

To put it in popular terms, category theory concerns the ``social life''
of mathematical objects. It is not a priori concerned with inner structure, but
rather how the objects relate to each other. But (just as with people),
how they relate reflects much on their inner structure. So, 
it also becomes a tool (a universal such) to understand inner structure.

\begin{example}
  Consider the mathematical notion of {\it group}.
  Groups form a collection of mathematical objects.
  Between any two groups $G_1$ and $G_2$ there are structure preserving maps,
  the {\it group homomorphisms}   $f : G_1 \pil G_2$.
  \begin{itemize}
  \item Morphisms may be composed. Given homomorphisms
    \[ f : G_1 \pil G_2, \quad g : G_2 \pil G_3 \]
    we get the composition $g \circ f : G_1 \pil G_3$.
  \item For each group $G$ there is an identity homomorphism
    $1_G : G \pil G$.
  \end{itemize}

  The homomorphisms fulfill two properties:
  \begin{itemize}
  \item If $h : G_3 \pil G_4$ is a third homomorphism, we have equality of
    the two compositions
    \[ h \circ (g \circ f), \quad (h \circ g) \circ f. \]
  \item The compositions $1_{G_2} \circ f = f \circ 1_{G_1} = f$.
  \end{itemize}
\end{example}  

\subsection{Categories}
Analogous situations, with objects and structure preserving maps that show how
the objects relate, occur all over. So, we take a large step up in
generality and abstraction to encompass this.

\begin{itemize}
\item  A category consists of a collection $\cC$ of {\it objects}.
(What they are we do not specify in the definition.
But a good list of examples gives us the concreteness needed to think
about them.)
\item Between any two objects $C_1$ and $C_2$ in $\cC $ there is a {\it set}
$\cC(C_1,C_2)$, the {\it morphisms} between $C_1 $ and $C_2$. If
$f \in \cC(C_1,C_2)$ we write $f : C_1 \pil C_2$.
\item For each three objects $C_1, C_2, C_3$ there is a map of sets
  \[\circ :  \cC(C_1,C_2) \times \cC(C_2,C_3) \pil \cC(C_1,C_3),
    \quad (f,g) \mapsto g \circ f   \]
\item For each object $C$ there is an element $\ben_C \in \cC(C,C)$,
  the {\it identity map}. 
\end{itemize}

\medskip 
The morphisms fulfill two properties:
\begin{itemize}
\item Associativity: Given
  \[f : C_1 \pil C_2, \, g : C_2 \pil C_3, \, h : C_3 \pil C_4  \]
  the compositions
  \[ h \circ (g \circ f), \quad (h \circ g) \circ f \]
    are equal.
  \item Composition with the identity map:
    $\ben_{C_2} \circ f = f \circ \ben_{C_1} = f$.
  \end{itemize}

  \begin{example}
    The most used category may be the category of sets, denoted $\set$.
    The objects are sets and for sets $X$ and $Y$ a morphism
    is simply a function $f : X \pil Y$.
   \end{example}

   That two objects are isomorphic means that they are
   ``structurally the same''. It seems one must somehow ``look inside''
   the objects. But this property can be described entirely by how
   they relate to each other, it is a  {\it categorical} notion.

   \begin{definition} Let 
     Let $f : C_1 \pil C_2$ be a morphism. This is an {\it isomorphism} if
     there is a morphism $g : C_2 \pil C_1$ such that $g \circ f = \ben_{C_1}$
     and $f \circ g = \ben_{C_2}$.
   \end{definition}

     Category theory was mainly developed as an {\it external language} for
  describing the general workings of how mathematical objects relate.
  But its fundamental status and significance also shows itself in how
  it may give {\it internal descriptions} of mathematical structures.

   Let us look at the simplest possible categories:
  \begin{itemize}
  \item[1.] Categories with a single object,
  \item[2.] Categories where for each pair of objects $a,b$ the
    set of morphisms $\cC(a,b)$ either has a single element, or is empty.
  \end{itemize}

   \subsection{Groups as categories}  
  \begin{example}
    Let a category have a single object $c$. The interesting part is
    then the morphisms $M = \cC(c,c)$. We have an identity morphism
    $1_c$ and we may compose:
    \[ \circ \, : \,  M \times M \pil M, \quad (f,g) \mapsto g \circ f. \]
    This fulfills
    \[ \ben_c \circ  f = f \circ \ben_c, \quad f \in \cC(c,c) = M \]
    and the associativity property
    \[ h \circ (g \circ f)  = (h \circ g) \circ f, \quad f,g,h \in M. \]
    This means precisely that $M$ is a {\it monoid} with $\circ$ as the
    product map.  If all the morphisms are
    isomorphisms, we get precisely the notion of {\it group}.

    This categorical view of a group in fact displays the basic idea that
    a group is the symmetries of some object. Compared to the modern abstract
    definition of a group the categorical viewpoint adds the classical
    intuition.
  \end{example}
Our first example was the category of groups. And now in fact we
  see that each single group itself may be considered a category.
  The inner structure of the object (group) is described as a category.

  \subsection{Preorders as categories}

    Consider a category whose objects form a set, and
    where  between any pair of objects $a,b$ there is {\it at most one}
    morphism.
    Write $a \leq b$ if there is a morphism.
    The existence of an identity morphism says that $a \leq a$ for
    every object $a$. If $a \leq b $ and $b \leq c$, there must
    be a composition, so $a \leq c$.

    The requirements of associativity of composition and on the identity
    morphism are automatic when there is at most a single morphism between
    two objects.
    Thus, such a category corresponds precisely to a set with a {\it preorder}.

    \begin{definition}
      A relation $\leq$ on a set $V$ is {\it preorder} if it is:

      \begin{itemize}
      \item {\bf Reflexive:} $a \leq a$ for every $a \in V$,
        \item {\bf Transitive:} $a \leq b$ and $b \leq c$ implies $a \leq c$.
        \end{itemize}
        We may have extra conditions.

\medskip \noindent a. The preorder is a {\it partial order},
      and $(V, \leq)$ is a {\it poset} if it is:
        \begin{itemize}
        \item {\bf Anti-symmetric:} $a \leq b$ and $b \leq a$ implies $a = b$.
        \end{itemize}
        In categorical language this says that each
        isomorphism class consists of a single
        object (element).
        
\medskip \noindent b. The preorder is an {\it equivalence relation} if it is:
        \begin{itemize}
        \item {\bf Symmetric:} $a \leq b$ implies $b \leq a$.
        \end{itemize}
        In categorical language this is a {\it groupoid}: Every morphism
        is invertible, so is an isomorphism. 

\medskip \noindent c. The preorder is {\it total} if:
        \begin{itemize}
        \item {\bf Comparability:}
          For any $a, b \in V$ either $a \leq b$ or $b \leq a$ (or both).
          \end{itemize}

\medskip \noindent d. The preorder is {\it discrete}
          if $a \leq b$ implies $a = b$, so any two distinct elements
          are not comparable.

\medskip \noindent e. The preorder is {\it coarse} 
          if $a \leq b$ for any $a, b \in V$.           
        \end{definition}

A preorder $\leq$ induces an equivalence relation $\sim$
where $a \sim b$ if
$a \leq b$ and $b \leq a$. The equivalence classes of $\sim$ are
the {\it bubbles} of the preordered set $(V,\leq)$.
Letting $\overline{V}$ be these equivalence classes, the preorder
induces a poset $(\overline{V},\leq)$.

\begin{example} \label{ex:cat-tre}
The following displays the preorders with two elements. Greater elements are
drawn above smaller.

\begin{equation*} \topre
\end{equation*}
The first is the discrete poset on two elements (two not comparable elements),
the second has two elements
where one is bigger than the other, and the third is the coarse preorder with
two elements. 

The following are the nine preorders with three elements:
\begin{equation*} \trepre
\end{equation*}

\begin{itemize}
\item[$\circ$] Discrete preorder: a. Coarse preorder: i.
  \item[$ \circ $] Partial orders: a, b, c, d, e.
  \item[$ \circ $] Total preorders: e, g, h, i.
  \item[$ \circ $] Equivalence relations: a, f, i. Preorders a, b, c, d, e induce
    the equivalence relation a. Preorders f, g, h induce f.
  \item[$ \circ $] Associated partial order of f is x, of g, h is y.
  \end{itemize}
  
\end{example}

The set of all preorders on $V$ will be denoted by $\Pre(V)$.
This set has itself a partial order
${\bf \preceq}$ where $\leq_1 \, \bpreceq \, \leq_2$
if $a \leq_1 b$ implies $a \leq_2 b$ for every $a,b \in V$.

When $V$ has two elements, $\Pre(V)$ has $4$ elements. 
When $V$ has three elements, $\Pre(V)$ has $29$ elements. The group
of permutations of $V$ acts on $\Pre(V)$, and the orbits correspond
precisely to the $9$ isomorphism classes of preorders listed above.
    


   \medskip   
  From these two most basic examples of categories (both occur
in the founding article \cite{EiML} from 1945), 
  we see the fundamental status of {\it symmetry} (groups) and of {\it order}
  (preorders) in mathematics. Symmetry and groups are perhaps the most
  studied and utilized themes in mathematics. This review concerns
  {\it order}.
  We give examples where order structures are not apparent on the face of it,
  but where
  a deeper investigation reveals them to be governed by underlying order
  structures. 

\section{Directed graphs, categories, and preorders}
\label{sec:digraph}

An even simpler type of order structure is provided by the notion of
{\it directed graph}. This induces both a preorder
and an enrichment of this preorder, a category, as we now shall see.

Let $V$ be a set and consider
  an ordered pair of maps $s,t : E \pil V$. This is equivalent to give
  a map $E \overset{s \times t}{\lpil} V \times V$. This is simply a directed
  graph $Q$ (with multiple edges allowed): The edges are $E$, the vertices
  $V$ and for an edge $e \in E$, $s(e)$ is the start vertex and $t(e)$ its
  end vertex. 

  It is tempting to think this could be a category with objects $V$ and
  morphisms from $v$ to $w$ to be edges from $v$ to $w$.
  At least an edge $e$ gives an {\it ordered} pair $(s(e), t(e))$. 
  This is not a category as it does not contain a prescription
  to compose two edges $e,f$ and get a new edge $g$. In particular
  it is also not a preorder.
  But it seems it should be the germ of a category and of a preorder.

  Consider paths of edges in this directed graph $Q$:
  \[ e_1, e_2, \ldots, e_n \]
  where $t(e_{i-1}) = s(e_i)$.
Letting $v = s(e_1)$ and $w = t(e_n)$ this is a {\it path} from $v$ to $w$.
At each vertex $v$ we consider there to
  be an empty path $1_v$.
  Paths $a$ and $b$ may be composed by concatenation, giving a product
  $ab$ {\it if} the
  end vertex of $a$ is the same as the starting vertex of $b$.
  This gives a partial associative product on the set of paths $Q^*$
  Thinking of a path from $v$ to $w$ as a morphism from $v$ to $w$, this becomes
  a category, the {\it free category} $\path(Q)$ generated by the
  directed graph $Q$. This gives a functor (see the Appendix for this notion),
  from directed graphs to small categories (small category: a category
  whose objects form a set).

  \begin{example} \label{ex:dig-dig}
  Consider the digraph $Q$:
\[    \digraph \]
It has the following paths:
\[ 1_a, \, 1_b, \, 1_c, \, e, \, f, \, g, \, h, \, eg, \, fg. \]
In the category $\path(Q)$ there are three morphisms from $a$ to $c$. These
are $h, eg$ and $fg$.
    \end{example}

  Conversely if $\cC$ is a small category, we get a directed graph by
  letting $E$ be the disjoint union $\sqcup_{a,b \in \cC} \Hom(a,b)$.
Denoting this forgetful functor by $U$, we
 have the two functors on the left below:
  \begin{equation} \label{eq:dig-adj}
    \Digraph \bihom{\path}{U} \smCat  \bihom{\preo}{\inc} \Preo.
    \end{equation} 
 These form an adjunction (see Appendix), with $\path $ left adjoint to $U $: 
  \[ \Hom_{\smCat}(\path(Q), \cC) = \Hom_{\Digraph}(Q, U \cC). \]
  On the right side of \eqref{eq:dig-adj}
  we have a functor $\preo$ sending a small category $\cC$ to
  a preorder $\pre(\cC)$ with the same objects and where $a \leq b$ iff there is
  some morphism $a \pil b$ in $\cC$. The functor $\inc$ is simply the inclusion
  functor. These form an adjunction with $\preo$
  left adjoint to $\inc$:
  \[ \Hom_{\Preo}(\preo(\cC), P) = \Hom_{\smCat}(\cC, \inc(P)). \]
  Putting these two adjunctions together we get the adjunction:
  \[ \Digraph \bihom{\preo \, \circ \, \path }{U \, \circ \, \inc}
    \Preo.\]  
 For a low-treshold introduction to categorical notions
  starting from preorders, see the first sections of \cite{ACT}.

\section{Topologies}
\label{sec:top}

Most of us have likely seen the basic definition of a topology on a
set $X$ in
terms of open sets. Usually in topology, the set $X$ 
is infinite, and one puts extra conditions on
the topology to develop richer theory. For instance, one may require each
point of $X$ to have an open set neighbourhood which looks like an
open set in some $\RR^n$. This gives the notion of topological { manifold}.
The modern notion of a topology was in an equivalent form introduced by
Felix Hausdorff in his influential book {\it Grundzüge der Mengenlehre}
in 1914:

\begin{definition} \label{defi:top-top}
Let $\cT$ be a family of subsets of a set $X$. This is a {\it topology}
on $X$ if:
\begin{itemize}
\item $\emptyset \in \cT$ and $X \in \cT$,
\item If $U, V \in \cT$, then $U \cap V \in \cT$,
\item If $\{U_i\}$ is any family of sets with $U_i \in \cT$, then
their union $\cup_i U_i \in \cT$.
\end{itemize}
The elements of $\cT$ are the {\it open sets} of the topology.
A subset of $X$ is {\it closed} if its complement set is open.
\end{definition}

No order structure is apparent on $X$ in this definition. Indeed,
any sphere with the standard notion of open subsets, forms
a topology, and a sphere has a priory no natural order structure. 
Now assume $X$ is finite. Digging into
what a topology on $X$ is, gives a surprise. First an example
with finite $X$:

\begin{example} \label{eks:top-pre}
  Let $(X,\leq)$ be a set with a preorder. Am {\it up-set} $U$ of this
  preorder, is a subset
  of $X$ such that $x \in U$ and $x \leq y$ in $X$ implies $y \in U$.
  Both $X$ and the empty set are up-sets of $X$. We also see that taking
  an arbitrary union or intersection of up-sets of $X$ we still get a
  up-set.
  Hence up-sets fulfill the requirement of being
  a topology.
\end{example}

Now comes an amazing correspondence and result. 
Given a topology
  $\cT$ on a set $X$.  For $x,y \in X$, define $x \leq y$ if
  any open subset $U$ containing $x$ also contains $y$.
  Clearly i) $x \leq x$ and ii) $x \leq y$ and $y \leq z$ implies $x \leq z$. 
So this gives a preorder. With Example \ref{eks:top-pre} we thus have correspondences:

\[ \Pre(X) \bihom{\topt}{\preo} \Top(X). \]

\begin{theorem} \label{top:thm-ft}
  For a finite set $X$, these correspondences are inverse bijections.
  Thus, to give a topology $\cT$ on $X$ is {\bf equivalent} to give
  a {\bf preorder} $\leq$ on $X$. The topology is then precisely the up-sets
  of the preorder.
\end{theorem}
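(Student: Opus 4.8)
The plan is to verify that the two composites $\preo \circ \topt$ and $\topt \circ \preo$ are the identity maps, having already granted — from Example \ref{eks:top-pre} and the paragraph preceding the theorem — that $\topt$ and $\preo$ are well defined (up-sets form a topology; the relation from a topology is a preorder). The key device throughout is the \emph{principal up-set} $U_x = \{ y \in X : x \leq y \}$ of a preorder, respectively the \emph{minimal open neighbourhood} of a point of a topology; the crux of the theorem is that on a finite $X$ these two notions coincide.

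First I would check $\preo \circ \topt = \id$ on $\Pre(X)$. Start with a preorder $\leq$, let $\cT$ be its topology of up-sets, and let $\leq'$ be the preorder $\preo(\cT)$, so $x \leq' y$ iff every up-set containing $x$ also contains $y$. If $x \leq y$, then by the definition of up-set any up-set containing $x$ contains $y$, so $x \leq' y$. Conversely, if $x \leq' y$, apply the defining condition to the particular up-set $U_x$: it is an up-set by transitivity and contains $x$ by reflexivity, hence contains $y$, i.e. $x \leq y$. Thus $\leq' \, = \, \leq$. This half uses no finiteness.

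Next I would check $\topt \circ \preo = \id$ on $\Top(X)$, which is where finiteness enters. Start with a topology $\cT$, let $\leq$ be the preorder $\preo(\cT)$, and let $\cT'$ be the topology $\topt(\leq)$ of up-sets of $\leq$. For $\cT \subseteq \cT'$: if $U \in \cT$ and $x \in U$ with $x \leq y$, then since $U$ is an open set containing $x$ we get $y \in U$ by the definition of $\leq$, so $U$ is an up-set. For $\cT' \subseteq \cT$: because $X$ is finite, the set $V_x := \bigcap \{ V \in \cT : x \in V \}$ is a \emph{finite} intersection of open sets, hence open; and $y \in V_x$ exactly when every open set containing $x$ contains $y$, i.e. exactly when $x \leq y$, so $V_x = U_x$. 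Now if $U$ is any up-set of $\leq$, then for each $x \in U$ we have $U_x \subseteq U$, whence $U = \bigcup_{x \in U} U_x$ is a union of open sets and therefore open. This gives $\cT' = \cT$, and in particular $\cT$ is exactly the up-sets of its associated preorder.

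The main obstacle — and the only place the hypothesis is used — is the openness of the minimal neighbourhood $V_x$: on an infinite set an arbitrary intersection of open sets need not be open (e.g. the usual topology on $\RR$), and the correspondence indeed fails there, the image of $\topt$ being exactly the topologies closed under arbitrary intersections. With $X$ finite this difficulty disappears and the two displayed inclusions close the argument. If desired one can append the harmless remark that both bijections are order-preserving — for $\bpreceq$ on $\Pre(X)$ and for inclusion of topologies on $\Top(X)$ — so the bijection is in fact an isomorphism of posets.
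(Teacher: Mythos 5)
Your proof is correct and follows essentially the same route as the paper: both halves are verified, with the second half hinging on the finiteness-guaranteed openness of the minimal open neighbourhood $\ux = \bigcap\{U \in \cT : x \in U\}$ and writing an up-set as a union of such sets. Your treatment is in fact slightly more explicit than the paper's (which dismisses $\preo \circ \topt = \id$ as immediate), but no new idea is involved.
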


\begin{proof} 
If a preorder $\leq $ induces a topology $\cT$, it is immediate that
this topology again induces the preorder $\leq$, so $\preo \circ \topt = \id$.

Let a topology $\cT$ induce a preorder $\leq$. This preorder induces
a topology $\cT^\prime$.
We show $\cT^\prime = \cT$. 
Let $U$ be an open subset of $X$.
If $x \in U$ and $x \leq y$, then $y \in U$ by definition of the preorder.
Hence $U$ is an up-set for $\leq$ and $U \in \cT^\prime$.
Thus $\cT \sus \cT^\prime$. 

Conversely let $U^\prime \in \cT^\prime$. We show $U^\prime \in \cT$.
For each $x \in U^\prime$, let
$\ux$ be the intersection of all $U$ in $\cT$ containing $x$. Since
$X$ is finite, this is a finite intersection and so $\ux$ is in $\cT$.
As $\cT \sus \cT^\prime$, $\ux$ is also in $\cT^\prime$.

Let $y \in U_x$. By definition of the preorder $\leq$, we have $x \leq y$.
But then $y \in U^\prime$ as $x \in U^\prime$ and $U^\prime$ is an up-set.
Hence $\ux \sus U^\prime$. So
$U^\prime = \cup_{x \in U^\prime} \ux$ and so $U^\prime \in \cT$. 
\end{proof}

It is remarkable that the notion of partially ordered set (poset) was
also introduced by Hausdorff in the first decade of the 1900's. 

  In Definition \ref{defi:top-top} we may require that for {\it any} family
  $\{U_i\}$, finite or infinite, of elements of $\cT $,
  the intersection $\cap U_i$ is
  in $\cT $. Such spaces were introduced by Pavel Alexandrov in 1937
  \cite{ASpace},
  and he called them ``discrete spaces''
  (a term nowadays used for the topology
  consisting of all subsets). In his 1986 book \cite{StSp} Peter Johnstone
  named them {\it Alexandrov spaces} or {\it Alexandrov topologies}. 
  The above proof shows that for any set $X$, Alexandrov topologies
  on $X$ correspond bijectively to preorders on $X$.
  This simple fact seems
  first explicitly stated in the influential 1966 papers
  \cite[Section 4]{McCo} and \cite[Thm.2.6]{Steiner}.
  But Alexandrov already in
  \cite{ASpace} observed that $T_0$-Alexandrov spaces
  (a space is $T_0$ if for any two points there is an open subset containing
  one of them and not the other) correspond bijectively to partial orders.
  The recent book \cite{May-FC} shows finite topologies as a rich source
  even from the viewpoint of topologists. \cite{HHfinite} illustrates
  how they are used as a good tool in teaching basic abstract concepts
  to undergraduates. 

  \medskip
{\it Birkhoff's theorem} \cite[Section 8]{DaPr} dates from 1937
and is very closely related to Theorem \ref{top:thm-ft}. 
  They are essentially the same. Birkhoff's theorem says that a 
  finite (abstract) distributive lattice is isomorphic to the
  lattice of up-sets in a partially ordered set $P$. This gives a
  bijection between
  finite distributive lattices and finite posets.

  A lattice of subsets of a finite set $X$, with union as join, and meet as
  intersection is the same as a topology on $X$.
  Such a lattice is also
  automatically distributive.
  So, Birkhoff's theorem produces a poset. It is in fact the
  poset $(\oX,\leq)$ induced on the set $\oX$ of bubbles of
  the preorder $X$. Birkhoff worked from the perspective of lattice
  theory, culminating in his 1940 monograph {\it Lattice theory} \cite{BiLa}.
  
  Although complete lattices of sets, i.e. lattices of subsets with
  {\it arbitrary unions and intersections} (which is the same as an
  Alexandrov topology) are discussed several places in the standard
  book \cite{DaPr}, it is surprising that it is not mentioned there 
  that this corresponds precisely to a preorder on the ambient set.

\section{Associative algebras}
\label{sec:ass}

An algebra over a field $k$ is a vector space $A$ over $k$
with a multiplication operation, which is simply a
map $\mu : A \te_k A \pil A$, and
we typically write the image $\mu(a,b)$ as $a \cdot b$ or simply $ab$.  

Various requirements may be put on $\mu$. It could be symmetric,
skew-symmetric, associative, fulfill a Jacobi-like identity
and so on. See \cite{LV} for a modern understanding with a plethora of
natural multiplication structures which can be put on a vector space.

A most basic one is being associative:

\[ a(bc) = (ab)c \] or, in terms of the map $\mu$, the following
diagram commutes:

\[ \xymatrix{ A \te A \te A \ar[d]_{\mu \te \ben}
\ar[r]^{\ben \te \mu} & A \te A \ar[d]^{\mu} \\
A \te A \ar[r]^{\mu} & A.}
\]

\begin{definition}
  Let $A$ be a $k$-vector space with a multiplication $\mu$. This is
  an {\it associative algebra} if:
  \begin{itemize}
  \item The multiplication is associative,
  \item There is an identity element $1$ such that $1\cdot a = a \cdot 1 = a$
    for $a \in A$.
  \end{itemize}

\end{definition}

\begin{example}
  Let $M_n$ be the $n \times n$-matrices over a field $k$. This is
  an associative algebra under matrix multiplication.

\end{example}

\begin{example} 
Let  $X$ be a set, an alphabet, and $X^*$ all words on
$X$, including the empty word. There is a product which is concatenation
of words:
\[ xyyz \cdot zyxz = xyyzzyxz. \]
This makes $X^*$ into a monoid, the free monoid on the set $X$.
\end{example}

  To a monoid $M$ we may form the associated monoid algebra $kM$,
  a vector space with basis $M$. 
The associated monoid algebra $kX^*$, usually written
$k\langle X \rangle$, is an associative algebra with basis the words $X^*$.
It is the {\it free associative algebra} on the vector space $kX$.








\subsection{Path algebras}
Let $Q$ be a directed graph $E \mto{(s,t)} V$ with $V$ finite (also
called a quiver).
Recall from Section \ref{sec:digraph}
the free category $\path(Q)$ generated by $Q$. 
Let $\kQ$ be vector space whose basis is the
set of paths $Q^*$ in $Q$. Recall that $Q^*$ are the morphisms in
$\path(Q)$.
We extend the partial product on $Q^*$ to
  a full product on $\kQ$ by $ab = 0$ if $a$ and $b$
  cannot be concatenated. 
  Now there {\it is} an identity element, simply the sum $1 = \sum_{v \in V} 1_v$.
  We then get the {\it path algebra} $\kQ$, an
  associative algebra. If $Q$ does not have directed cycles, this is a
  finite-dimensional algebra. If $Q$ has directed cycles, $\kQ$
  is infinite-dimensional as a vector space.
  
 Given a set $R$ of pairs of paths
  $(p^1_i, p^2_i)$ 
  where the paths in each pair have the same start vertex and the same
  end vertex, we may form a quotient category $Q^*/R$ where the
  paths $p^1_i$ and $p^2_i$ are declared to be equal. The objects
  are still $V$ and the morphisms are the equivalence classes of paths
  generated by the equivalence relation $t p^1_i s \equiv t p^2_i s$,
  for $t, s$ are paths in $Q$. 
  In particular, if we for each pair of vertices $v,w$ declare
  all paths between them to be equal, we get the preorder generated by
  the directed graph.

  \medskip
  More generally we can consider the path algebra $\kQ$ and
  a subset $R \sus \kQ$ (relations in $\kQ$). Let $(R)$
  be the two-sided ideal in $\kQ$ generated by $R$. This
  gives a quotient algebra $\kQ/(R)$.

  A relation $r \in R$ is a linear combination of paths $r = \sum_i \alpha_i
  P_i$. By multiplying with an idempotent $1_v$ on the left and an
  idempotent $1_w$ on the right, we get a new relation still in $(R)$.
  If this is non-zero
  every $P_i$ in this new sum have the same start vertex $v$ and the same end
  vertex $w$. Thus, we may assume $(R)$ is generated by linear combinations
  of paths with the same start vertex and the same end vertex. 

  \begin{example} Consider the directed graph of Example \ref{ex:dig-dig}
    and in its path
    algebra the relation:
    \begin{equation} \label{eq:ass-rel} h  + 3f - eg.
      \end{equation} 
    Multiplying with $1_c$ on the right we get the relation $h - eg$, and
    multplying on the right with $1_b$, we get the relation $3f$. Hence the
    ideal generated by \eqref{eq:ass-rel} is the same as the ideal
    generated by $3f$ and $h-eg$.
  \end{example}

  Let $I_Q$ be the ideal generated by the paths of length $\geq 1$.
  We may assume $(R) \sus I_Q^2$, the paths of length $\geq 2$. Otherwise
  the quotient algebra $k\langle Q \rangle/(R)$ could be written
  as $k\langle Q^\prime \rangle/(R^\prime)$ for a subquiver $Q^\prime$ of $Q$.
  When $(R) \sus I_Q^2$ we can recover $Q$ from the algebra
  $k\langle Q \rangle/(R)$. That $k\langle Q \rangle/(R)$ is
  finite-dimensional is equivalent to $I_Q^m \sus (R)$ for some $m$. When
  \[ I_Q^m \sus (R) \sus I_Q^2 \] for some $m$, $(R)$ is {\it admissible}. 

  \medskip
  The take-away is that directed graphs, together with relations between
  paths give a large reservoir of associative algebras. Surprisingly for
  finite-dimensional algebras, they are essentially all.

  \subsection{Finite dimensional algebras and module categories}

 \begin{definition}
  A $k$-vector space $M$ is a (left) module over $A$ if there is multiplication
  map $\nu : A \te M \pil M$ such that (we write $am := \nu(a,m)$):
  \begin{itemize}
  \item $a(bm) = (ab)m$
  \item $1m = m$
  \end{itemize}
\end{definition}

If $A$ is an associative $k$-algebra, we denote by $\mod_A$ the category of
finite dimensional modules. The simplest such module
categories arise when $A = k$. The modules are then simply the finite
dimensional
$k$-vector spaces. It has only one indecomposable module, the one-dimensional
vector space $k$. Every finite dimensional vector space is isomorphic
to $k^n$ for some $n$.

Considering the matrix algebra $M_n$ the situation is similar. We consider
$k^n$ as a column vector and it is a module over $M_n$ by matrix multiplication
on the left. Then $k^n$ is a simple $M_n$-module, and it is actually the
unique simple $M_n$-module. Every finitely
generated module over $M_n$ is a direct sum of this simple module.
(Contrast this
to representations of the {\it Lie group} $GL(n)$.)
The module categories $\mod_k$ and $\mod_{M_n}$ are in fact equivalent
categories. Two associative algebras with equivalent module categories
are said to be {\it Morita equivalent}. Thus the $M_n$ are Morita equivalent
algebras for $n \geq 1$.  \cite[Section 18]{Lam} gives an excellent
presentation of Morita equivalence.

From the definition of associative algebras there is no apparent order
on such algebras. (Nevertheless, a hint may be non-commutativity, that
$ab$ in general does not equal $ba$.) 
The following shows however that basic order relations underlie
the structure of
finite dimensional associative algebras, \cite[Theorem II.6.16]{Sko} or
\cite[Theorem 3.7]{ASS},
or more implicit \cite[Cor.II.2.6 and III.Cor.1.10]{ARS}.

\begin{theorem} Let $k$ be algebraically closed.
  Any finite dimensional associative $k$-algebra where $k$ is a field,
  is Morita equivalent to a {\bf path algebra} with relations $\kQ/(R)$ where
  $(R)$ is an admissible ideal. 
\end{theorem}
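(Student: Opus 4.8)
The plan is to prove the theorem in three movements: reduce to a \emph{basic} algebra by Morita equivalence, present a basic algebra by a quiver with relations, and then check that the relations are admissible.

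First I would reduce to the basic case. Given a finite dimensional $k$-algebra $A$, pick representatives $S_1,\dots,S_n$ of the isomorphism classes of simple left $A$-modules with projective covers $P_1,\dots,P_n$, put $P = P_1 \lsum \cdots \lsum P_n$, and let $B = \Hom_A(P,P)^{\op}$. Then $\Hom_A(P,-)$ induces an equivalence $\modv{A} \simeq \modv{B}$, so $A$ is Morita equivalent to $B$, and $B$ is basic: $B/J(B)$ is a product of the division rings $\Hom_A(S_i,S_i)$. This is exactly where the hypothesis that $k$ is algebraically closed is used: each $\Hom_A(S_i,S_i)$ is a finite dimensional division algebra over $k$, hence equals $k$ by Schur's lemma, so $B/J(B) \iso k \times \cdots \times k$ ($n$ factors). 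Hence it suffices to prove the statement when $A$ is basic with $A/J \iso k^n$, where $J = J(A)$ is the Jacobson radical.

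Next I would construct the quiver and a surjection onto $A$. Since $A$ is finite dimensional, $J$ is nilpotent, say $J^N = 0$; in particular idempotents lift along $A \pils A/J$, so I may pick primitive orthogonal idempotents $e_1,\dots,e_n$ summing to $1$ with $e_i$ lifting the $i$-th standard idempotent of $k^n$. Define a quiver $Q$ with vertices $1,\dots,n$ and with exactly $\dim_k e_j(J/J^2)e_i$ arrows from $i$ to $j$ --- a finite number since $J/J^2$ is finite dimensional --- and for each arrow choose a lift in $e_j J e_i$ of a fixed basis vector of $e_j(J/J^2)e_i$. Sending $1_i \mapsto e_i$ and each arrow to its chosen lift extends uniquely to a $k$-algebra homomorphism $\varphi : \kQ \pil A$. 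The hard part will be surjectivity of $\varphi$, a Nakayama-type argument: by construction $\varphi(I_Q) \sus J$ and the images of the arrows span $J/J^2$, so $J = \varphi(I_Q) + J^2$; iterating gives $J = \varphi(I_Q) + J^m$ for every $m$, and $J^N = 0$ forces $J = \varphi(I_Q)$. Since also each $e_i \in \im\varphi$ and $\sum_i e_i = 1$, we get $A = k^n \lsum J \sus \im\varphi$. This is precisely the step where nilpotence of $J$ (equivalently, finite dimensionality of $A$) is essential.

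Finally I would verify that $(R) := \ker\varphi$ is admissible, whence $A \iso \kQ/(R)$ finishes the proof modulo the Morita reduction. Since the arrows generate $J$ we have $\varphi(I_Q^m) = J^m$ for all $m$, so $I_Q^N \sus (R)$. For $(R) \sus I_Q^2$: given $r \in \ker\varphi$, write $r = r_0 + r_1 + r'$ with $r_0$ its length-$0$ part, $r_1$ its length-$1$ part, and $r' \in I_Q^2$; then $\varphi(r_0) = \sum_i \lambda_i e_i$, $\varphi(r_1) \in J$, $\varphi(r') \in J^2$. Reducing modulo $J$ forces $\sum_i \lambda_i \overline{e_i} = 0$ in $A/J \iso k^n$, so all $\lambda_i = 0$ and $r_0 = 0$; reducing modulo $J^2$ then forces $\varphi(r_1) \equiv 0$ in $J/J^2$, and since $\varphi(r_1)$ is a linear combination of the chosen basis of $J/J^2$ we get $r_1 = 0$. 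Hence $r = r' \in I_Q^2$, so $I_Q^N \sus (R) \sus I_Q^2$, which is admissibility (and reconfirms $\kQ/(R)$ is finite dimensional, as it must be). The remaining routine points --- that the presentation $\kQ/(R)$ is independent of the choices up to isomorphism, and that $Q$ is intrinsic to $A$ --- I would only indicate briefly, since they are not needed for the bare existence statement.
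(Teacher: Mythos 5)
Your proposal is correct, and it is essentially the standard argument (Morita reduction to a basic algebra via the endomorphism algebra of the direct sum of projective covers of the simples, then Gabriel's presentation of a basic algebra by its Ext-quiver built from $J/J^2$, with the kernel checked to be admissible). The paper itself gives no proof but defers to \cite{Sko}, \cite{ASS}, \cite{ARS}, whose proofs follow exactly this route, so there is nothing to contrast.
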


Morita equivalence dates back to \cite{Mo58}. The ideas from that article
were then widely circulated in lectures by Hyman Bass
in the 1960's. 
Due to above theorem, in the representation theory of finite-dimensional
algebas one needs only work with representations of path algebras
with admissible relations.
The representation theory of finite-dimensional associative algebras
took off especially with the work of Maurice Auslander and Idun Reiten
in the 1970's with what is now called Auslander-Reiten theory
\cite{ARS}, and has since been
a very active area. A finite-dimensional
associative algebra may be considered
a category with one object and with morphisms enriched in finite-dimensional
vector spaces.
The setting for the above theory is now the greater generality of 
(finite) categories
enriched in finite-dimensional vector spaces.

\section{Subgroups of $\glv$}
\label{sec:glv}

Here we consider the group $\gln$ of invertible
$n \times n$-matrices and its subgroups which contain the diagonal
matrices. We will see a striking correspondence between such subgroups
and preorders on a set of cardinality $n$. 

We assume in this section that $k$ is an algebraically closed field
of characteristic zero. Let $k^*$ be the non-zero elements of $k$. 
Let $V$ be a vector space of dimension $n$ and $\glv$ the group if
invertible linear maps $g : V \pil V$.
If $H$ is a subgroup (not necessarily normal) of a group $G$ we use the
notation $H \triangleleft G$. 

\subsection{Subgroups containing a maximal torus}
If we fix an ordered basis for $V$:  $v_1, v_2, \ldots, v_n$, we may
represent $\glv$ as invertible matrices $\gln$. The
diagonal matrices $D$ is isomorphic to $(k^*)^n$, and an element
$\bfu = (u_1, \ldots, u_n) \in D$ acts on $V$ by:
\[ \sum_1^n k_i v_i \overset{u}{\mapsto} \sum_1^n k_i u_i v_i. \]
A subgroup of $GL(V)$ isomorphic to $(k^*)^n$ is called a
{\it maximal torus}. There are many such. For each $g \in \glv$, the
conjugate $T = g^{-1} D g$ is a maximal torus, and every maximal torus
is such a conjugate, \cite[Cor. 21.3A]{Hum}.

Now given a torus $T \iso (k^*)^n$ acting on a finite dimensional
vector space $W$. The irreducible representations of $T$ are one-dimensional.
So there is a set $X$ of vectors in $W$ such that $W = \oplus_{x \in X} V_x$
where $V_x = \langle x \rangle$ is the one-dimensional
vector space generated by $x$. When $T$ is a maximal torus in $\glv$
acting on $V$, we have $V = \oplus_{x \in X} V_x$ where the choice of each
$x$ is determined up to a scalar multiple. The group $\glv$ may be
identified with (unordered) matrices $(a_{xy})_{x,y \in X}$, and
$T$ may then be identified with the diagonal matrices:
$a_{xy} = 0$ for $x \neq y$ and $a_{xx} \neq 0$. Write then $\glx$
for $\glv$. 
If we put a total order on $X$ these become the (ordered) matrices $\gln$ and
the (ordered) diagonal matrices.

Now let $X$ have a preorder $\geq$. Each $x \in X$ determines an
up-set $\ux = \{ y \in x \, | \, y \geq x \}$. It generates a subspace
$\langle \ux \rangle$ of $V$.

\begin{definition} \label{defi:glv-glxo} Given a set $X$ with a preorder
  $\geq$. Denote by 
  $\glxo{}$ the subgroup of $\glx$ consisting of all $g \in \glx$
  such that $g. \langle \ux \rangle \sus \langle \ux \rangle$
  for each $x \in X$. 
\end{definition}

\begin{example}
  On the set $X = \{x,y,z\}$ there are $29$ preorders. To display
  $GL_\geq(X)$ as a matrix we put $x,y,z$ in sequence. (This may be
  seen as a total order, the display order, but is unrelated to the
  order $\geq$.) The six possible total orders on $X$, giving diagram
  e. in Example \ref{ex:cat-tre}, give the six matrix groups below
  (with * if an entry may be nonzero) 
  :
  \begin{equation*}
\overset{x > y > z}{ \left [\begin{matrix} * & * & * \\ 0 & * & * \\ 0 & 0 & * \end{matrix}\right ]},
    \overset{x > z > y}{ \left [\begin{matrix} * & * & * \\ 0 & * & 0 \\ 0 & * & * \end{matrix}\right ]},
\overset{y > z > x}{ \left [\begin{matrix} * & 0 & 0 \\ * & * & * \\ * & 0 & * \end{matrix}\right ]},
    \overset{y > x > z}{ \left [\begin{matrix} * & 0 & * \\ * & * & * \\ 0 & 0 & * \end{matrix}\right ]},
\overset{z > x > y}{ \left [\begin{matrix} * & * & 0 \\ 0 & * & 0 \\ * & * & * \end{matrix}\right ]},
    \overset{z > y > x}{ \left [\begin{matrix} * & 0 & 0 \\ * & * & 0 \\ * & * & * \end{matrix}\right ]}.
        \end{equation*}

If we consider the nine isomorphism classes of  preorders a. $\ldots$ i. of
Example \ref{ex:cat-tre}, with a choice of labelling of $x,y,z$ we have the
following subgroups representing them:
 \begin{equation*}
\overset{a.}{ \left [\begin{matrix} * & 0 & 0 \\ 0 & * & 0 \\ 0 & 0 & * \end{matrix}\right ]},
    \overset{b.}{ \left [\begin{matrix} * & * & 0 \\ 0 & * & 0 \\ 0 & 0 & * \end{matrix}\right ]},
\overset{c.}{ \left [\begin{matrix} * & 0 & * \\ 0 & * & * \\ 0 & 0 & * \end{matrix}\right ]},
    \overset{d.}{ \left [\begin{matrix} * & * & * \\ 0 & * & 0 \\ 0 & 0 & * \end{matrix}\right ]},
\overset{e.}{ \left [\begin{matrix} * & * & * \\ 0 & * & * \\ 0 & 0 & * \end{matrix}\right ]},
\end{equation*}

 \begin{equation*}
\overset{f.}{ \left [\begin{matrix} * & 0 & 0 \\ 0 & * & * \\ 0 & * & * \end{matrix}\right ]},
    \overset{g.}{ \left [\begin{matrix} * & * & * \\ 0 & * & * \\ 0 & * & * \end{matrix}\right ]},
\overset{h.}{ \left [\begin{matrix} * & * & * \\ * & * & * \\ 0 & 0 & * \end{matrix}\right ]},
    \overset{i.}{ \left [\begin{matrix} * & * & * \\ * & * & * \\ * & * & * \end{matrix}\right ]},
        \end{equation*}
        The reader may convince herself that given a type of matrices above,
        multiplying
        two such matrices gives the same type.
      \end{example}

Recalling the partial order $\bsucceq$ on $\Pre(X)$, Section \ref{sec:cat}, 
some observations are:

\begin{itemize}
\item $\glxo{}$ is a closed subgroup of $\glx$. It identifies as all
  invertible matrices $(a_{xy})$ such that $a_{xy} = 0$ if not
  $x \leq y$.
\item If $\geq_D$ is the discrete preorder, then $\glxo{D}$ is the
  maximal torus $T$.
\item If $\geq_C$ is the coarse preorder, then $\glxo{C}$ is the
  whole general linear group $\glx$.
\item If $\geq_L$ is a total order $x_1 >_L x_2 >_L \cdots >_L x_n$, then
  using this also as the display order, 
  $\glxo{L}$ identifies
  as the upper triangular matrices $B$ of $\gln$.
  \item If $\geq_1 \, \bsucceq \, \geq_2$ are two preorders on $X$, then
  $\glxo{1} \triangleright \glxo{2}$.
\end{itemize}

\subsection{Correspondence between subgroups and preorders}
Fix a maximal torus $T \sus \glv$ with corresponding basis $X$ of $V$.
Define a $T$-subgroup of $GL(V)$ to be a subgroup of $GL(V)$ containing
$T$. The correspondence given in Definition \ref{defi:glv-glxo}
gives a functor:
\[ \Pre(X) \overset{GL_{( )}(X)}{\lpil} T\!-\!\text{subgroups of } \glx. \]
Conversely given a subgroup $H$ with $T \triangleleft H \triangleleft \glv$, let $\cT \sus P(X)$
(a subset of the power set on $X$)
consist of all subsets $Y \sus X$ such that $g.\langle Y \rangle  \sus
\langle Y \rangle$ for $g \in H$.

\begin{claim} $\cT$ is a topology on $X$. \end{claim}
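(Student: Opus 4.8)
The plan is to strip the group action down to its linear-algebra content. First I would observe that because $H$ is a group, the condition defining membership in $\cT$ --- that $g.\langle Y \rangle \sus \langle Y \rangle$ for every $g \in H$ --- is equivalent to $g.\langle Y \rangle = \langle Y \rangle$ for every $g \in H$, by applying the inclusion both to $g$ and to $g^{-1}$. Hence $\cT$ is exactly the set of those subsets $Y \sus X$ for which the coordinate subspace $\langle Y \rangle$ is an $H$-submodule of $V$. (Note that $T \sus H$ plays no role in this particular claim; it enters only later, when one checks that the correspondence between subgroups and preorders is a bijection.)

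With this reformulation the three axioms of Definition~\ref{defi:top-top} become statements about $H$-submodules of $V$. Since $\langle \emptyset \rangle = 0$ and $\langle X \rangle = V$ are $H$-submodules, $\emptyset$ and $X$ lie in $\cT$. For a family $\{U_i\} \sus \cT$ one has $\langle \bigcup_i U_i \rangle = \sum_i \langle U_i \rangle$, and a sum of $H$-submodules is again an $H$-submodule, so $\bigcup_i U_i \in \cT$; this in fact gives closure under arbitrary unions, although for a finite $X$ only finite unions are needed. For $U, W \in \cT$ the crux is the elementary identity $\langle U \rangle \cap \langle W \rangle = \langle U \cap W \rangle$: the inclusion $\sups$ is immediate, and for $\sus$ I would expand a vector $v$ lying in both spans in terms of the basis $X$ and use uniqueness of coordinates, forcing the support of $v$ to be contained in $U$ and in $W$, hence in $U \cap W$. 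Since an intersection of $H$-submodules is an $H$-submodule, $\langle U \cap W \rangle$ is $H$-invariant and $U \cap W \in \cT$.

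The only step carrying any content is the coordinate-subspace identity $\langle U \rangle \cap \langle W \rangle = \langle U \cap W \rangle$, and even that is routine; the single point to be careful about is that it genuinely uses that $X$ is a \emph{basis} of $V$ --- it would fail for an arbitrary spanning set --- which is precisely why the whole construction is carried out relative to the fixed eigenbasis $X$ of the maximal torus. Everything else reduces to the standard fact that the submodules of a module are closed under sums and intersections.
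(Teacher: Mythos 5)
Your proof is correct and follows essentially the same route as the paper's: both rest on the identities $\langle Y_1 \cap Y_2 \rangle = \langle Y_1 \rangle \cap \langle Y_2 \rangle$ and $\langle Y_1 \cup Y_2 \rangle = \langle Y_1 \rangle + \langle Y_2 \rangle$ together with the fact that intersections and sums of $H$-invariant subspaces are $H$-invariant. You merely supply a bit more detail than the paper (checking $\emptyset$ and $X$, verifying the intersection-of-spans identity via the basis, and the optional remark that invariance is automatically equality); none of this changes the argument.
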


\begin{proof} 
  Let $Y_1, Y_2 $ be in $\cT$. Then
  \[\langle Y_1 \cap Y_2 \rangle = \langle Y_1 \rangle \cap \langle Y_2 \rangle,
    \quad
    \langle Y_1 \cup Y_2 \rangle = \langle Y_1 \rangle +  \langle Y_2 \rangle.\]
  The right sides of these equalities are invariant for $g \in H$, and
  so also the left sides, so both $Y_1 \cap Y_2$ and
  $Y_1 \cup Y_2$ are in $\cT $.
\end{proof}

By Section \ref{sec:top}, $\cT $ corresponds to a preorder $\geq $ in $X$,
and $\cT $ is the set of all up-sets of $\geq $.
Thus, we get functors:
\begin{equation} \label{eq:glv-ad}
  \Pre(X) \bihom{GL_{( )}(X)}{\pre}  T\!-\!\text{subgroups of } \glx,
\end{equation}
which are really just order-preserving maps between posets.
As
\[ \pre(H) \, \bpreceq \,\,   \leq  \quad \text{iff} \quad H \triangleleft
\glxo{}, \]
this is an adjunction, or more specifically a Galois correspondence, and
$\pre $ is a left adjoint to $GL_{( )}(X)$. The composite $\pre \circ GL_{( )}(X)$ is the identity.

\begin{example}
  For each bijection $\pi : X \pil X$, consider invertible matrices $(a_{x,y})$
  such that $a_{x,y} = 0$ if $y \neq \pi(x)$. These must have each
  $a_{x,\pi(x)} \neq 0$. Let $A$ be such a  matrix, a $\pi$-{\it matrix}.
  Let $\sigma$ be another permutation of $X$, and $B$ a $\sigma$-matrix.
  Then the product $AB$ is a $\sigma \circ \pi$-matrix. Hence these matrices
  form a subgroup $N$ of $GL(X)$ containing the diagonal matrices $D$.
  In fact, $N$ is the normalizer of $D$ in $GL(X)$, and $N/D$ identifies
  as the finite group $S(X)$ of permutations of $X$.
  The connected components of the algebraic group $N$ are in bijection
  with $S(X)$, one component for each permutation.

  The preorder associated to $N$ is the coarse preorder.
  So, the correspondences \eqref{eq:glv-ad} are far from being bijections.
\end{example}

Note that the image of $GL_{( )}(X)$ in \eqref{eq:glv-ad} is always
a connected algebraic subgroup. In fact each $\glxo{}$ is
an open subset of an affine space. This suggests restricting
to connected $T$-subgroups.
\begin{equation*} 
  \Pre(X) \bihom{GL_{( )}(X)}{\pre}  \text{Connected }
  T\!-\!\text{subgroups of } \glx,
\end{equation*}

Z.I. Borevich 
\cite{Bor} describes all $T$-subgroups of $GL(X)$. 
An immediate consequence is the following.

\begin{theorem} Let $k$ be an algebraically closed field.
  The correspondences \eqref{eq:glv-ad} above are inverse bijections between
  connected algebraic
  $T$-subgroups of $GL(X)$ and {\bf preorders} on the set $X$.
\end{theorem}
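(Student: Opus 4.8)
The plan is to bootstrap from the adjunction \eqref{eq:glv-ad} and upgrade it to a bijection by showing that every connected $T$-subgroup $H$ lies in the image of $GL_{(\,)}(X)$; equivalently, that $H = \glxo{}$ where $\geq$ is the preorder $\pre(H)$. Since we already know the composite $\pre \circ GL_{(\,)}(X) = \id$, it suffices to show the reverse composite is the identity on connected $T$-subgroups, i.e.\ $GL_{\pre(H)}(X) = H$. One inclusion is free: if $\geq = \pre(H)$ then by construction every up-set $\langle \ux\rangle$ is $H$-invariant, so $H \triangleleft \glxo{}$. The content is the opposite inclusion $\glxo{} \triangleleft H$, and this is exactly what requires the input of Borevich's classification \cite{Bor}.

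First I would recall precisely what Borevich's theorem gives: a description of \emph{all} $T$-subgroups $H$ of $GL(X)$ in terms of combinatorial ``nets'' (or ``$D$-nets'') of ideals, which over a field assigning to each ordered pair $(x,y)$ a value in $\{0, k\}$ specifying whether the $(x,y)$-entry is forced to vanish; the closure conditions on a net are precisely reflexivity and transitivity, i.e.\ the net datum is a preorder $\geq$ on $X$, and the corresponding ``net subgroup'' is $\glxo{}$. The subtlety is that a general $T$-subgroup need not equal its net subgroup: it sits between the net subgroup's derived-type object and the net subgroup itself, the discrepancy being controlled by subgroups of the diagonal torus at certain ``symmetric'' positions where both $x \geq y$ and $y \geq x$. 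Concretely, inside a bubble of the preorder one gets a full $GL$-block, but $H$ could be a proper subgroup cutting across several diagonal entries by a multiplicative-character condition --- and such conditions are exactly what disconnect the group. So the key step is: a connected $T$-subgroup cannot impose any such proper condition, hence must coincide with its net subgroup $\glxo{}$, with $\geq$ recoverable as the net, which by the topology correspondence of Section \ref{sec:top} is the preorder, and one checks this net is $\pre(H)$.

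I would organize the argument as follows. (1) Cite Borevich: every $T$-subgroup $H$ determines a net, and $H$ is sandwiched between a subgroup generated by the unipotent root subgroups of the net together with a \emph{subgroup} $D_H \triangleleft D$ of the torus, and the full net subgroup $\glxo{}$; moreover $\glxo{}/(\text{this smaller group})$ is a product of copies of $D/D_H$ over the bubbles. (2) Observe $\glxo{}$ is connected --- it is an open subvariety of an affine space (noted already in the excerpt) --- and likewise each unipotent root subgroup $\iso k$ and the whole torus $D \iso (k^*)^n$ are connected. (3) Deduce that if $H$ is connected it must contain all of $D$ (since $D/D_H$ finite forces $D_H = D$ when $H$ is connected and contains $D$... here one uses $T \triangleleft H$ so actually $D_H \supseteq D$ already, giving the full torus at every symmetric block), hence $H$ equals the net subgroup $\glxo{}$. (4) Identify the net of $H$ with the topology $\cT$ of the Claim, invoke Theorem \ref{top:thm-ft} to get the preorder $\geq$, and verify $\geq = \pre(H)$ directly from the definitions, so that $H = GL_{\pre(H)}(X)$. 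Combined with $\pre \circ GL_{(\,)}(X) = \id$ from the Galois correspondence, this gives the claimed inverse bijection.

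The main obstacle is step (3): making rigorous why connectedness kills exactly the torus-level conditions that Borevich allows. The honest way is to extract from \cite{Bor} the explicit normal series $U_{\geq} \cdot D_H \triangleleft H \triangleleft \glxo{}$ with abelian, in fact diagonalizable, quotient $\glxo{}/H$ whose component group surjects onto a product of $D/D_H$; since $T = D \triangleleft H$, one has $D_H = D$, the quotient $\glxo{}/H$ is finite, and a finite quotient of a connected group is trivial, so $H = \glxo{}$. The place where care is needed is confirming that $T \triangleleft H$ genuinely forces $D_H = D$ in Borevich's parametrization (a priori $D_H$ is the torus part \emph{of the net-theoretic lower bound}, and one must check the constraint $T \subseteq H$ propagates to it) --- this is a bookkeeping point about his normalization conventions rather than a deep issue, but it is the hinge of the whole proof and is why the theorem is phrased as ``an immediate consequence'' of \cite{Bor} rather than proved from scratch here.
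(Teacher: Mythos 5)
Your overall plan is the paper's: quote Borevich's classification of $T$-subgroups, deduce that a \emph{connected} $T$-subgroup must equal the net group $GL_{\geq}(X)$ of its preorder, and combine with the already-established identity $\pre \circ GL_{(\,)}(X) = \id$ and the easy inclusion $H \triangleleft GL_{\pre(H)}(X)$. The gap is that the input you attribute to \cite{Bor} is stated backwards, and your step (3) collapses as a result. For subgroups \emph{containing} the full diagonal torus over a field (with enough elements), Borevich's sandwich is $GL_{\geq}(X) \triangleleft H \triangleleft N_\geq$, where $\geq$ is the net read off from the transvections lying in $H$ and $N_\geq$ is the normalizer of $GL_{\geq}(X)$, with $N_\geq/GL_{\geq}(X)$ finite; the discrepancy sits \emph{above} the net group and is of finite, permutation-like type, not \emph{below} it via torus-character conditions. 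Indeed, since $H \supseteq D$ by hypothesis, no condition of the form $D_H \subsetneq D$ can occur at all, and (as you yourself note) once $D_H = D$ your lower bound $U_\geq \cdot D$ already generates $GL_{\geq}(X)$ over a field by Gaussian elimination inside the net. So your claimed sandwich $U_\geq\cdot D_H \triangleleft H \triangleleft GL_{\geq}(X)$ would force $H = GL_{\geq}(X)$ for \emph{every} $T$-subgroup, with connectedness never genuinely used --- and that conclusion is false. The paper's own example refutes it: the normalizer $N$ of the diagonal torus (monomial matrices) is a $T$-subgroup; it contains no nontrivial transvections (its Borevich net is the discrete preorder, while $\pre(N)$ is the coarse one), it is not contained in any proper net group compatible with your lower bound, and it is certainly not a net group. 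Also, the heuristic that character conditions on a block ``are exactly what disconnect the group'' is off: the kernel of $\det$ on a block is connected.

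The repair is to use the correct sandwich, which is exactly how the paper argues: from $GL_{\geq}(X) \triangleleft H \triangleleft N_\geq$ with $N_\geq/GL_{\geq}(X)$ finite and $GL_{\geq}(X)$ connected (an open subset of an affine space), the identity component of any $T$-subgroup $H$ is $GL_{\geq}(X)$; hence a connected $T$-subgroup equals $GL_{\geq}(X)$ for its net $\geq$, and one checks (as you intended in step (4)) that this net agrees with $\pre(H)$, e.g.\ via the topology of invariant coordinate subspaces and Theorem \ref{top:thm-ft}. Together with $\pre \circ GL_{(\,)}(X) = \id$ this gives the inverse bijection. So the missing idea is not bookkeeping about $D_H$ in Borevich's normalization; it is that connectedness is what kills the finite extension inside the normalizer on top, not a torus defect at the bottom.
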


For the description of {\it all} $T$-subgroups of $GL(X)$,
to each preorder $\geq$ let $N_\geq$ be the normalizer of $\glxo{}$.
Then $N_\geq/\glxo{}$ is a finite group. For a $T$-subgroup $H$ of $GL(X)$
its identity component contains $T$ and will thus be some group $\glxo{}$. 
Borevich shows that such subgroups $H$ are then in bijection with subgroups of
$N_\geq/\glxo{}$.
Total preorders $\geq$ correspond to parabolic subgroups $\glxo{}$
and in this case the normalizer $N_\geq$ actually equals $\glxo{}$.

\begin{remark} \cite{Bor} classifies all $T$-subgroups of $GL(V)$
  under the assumption that the field has $\geq 7$ elements.
  We consider here algebraic subgroups. It is only when the field is
  algebraically closed that there is a bijection between (irreducible) varieties
  and prime ideals. So, we state the above with this assumption.
  \end{remark}


\section{Ideals in polynomial rings}
\label{sec:pol}

Polynomial rings $k[x_1, \ldots, x_n]$ and ideals $I$ in polynomial
rings are the classical and fundamental algebraic objects in
algebraic geometry and commutative algebra.
Ideals may be degenerated to simpler ideals. The question we address here
is: What are the most degenerate ideals in a polynomial ring?

Let $\AA^n_k$ be the affine $n$-space, consisting of all $n$-tuples
$(a_1, \ldots, a_n)$ where the $a_i \in k$. 
To the ideal $I$ we associated its geometric object, the algebraic set
\[ V(I) = \{ \bfa \in \AA^n_k \, | \, f(\bfa) = 0 \text{ for every } f \in I\}.
\]
When $k$ is algebraically closed, Hilbert's Nullstellensatz says that this
is a bijection between algebraic sets in $\AA^n_k$ and {\it radical} ideals in
$k[x_1, \ldots, x_n]$. The ideal $I$ is radical if $f^n \in I$ implies
$f \in I$. An algebraic set which is {\it irreducible}, i.e.
not the union of two
proper algebraic subsets is a {\it variety}. Varieties are in bijection with
prime ideals in $k[x_1, \ldots, x_n]$.

\medskip
\subsection{Families of ideals and algebraic sets}
Varieties and ideals come in families. Consider the ideal in
$k[x,y]$ generated by the polynomials
\[ p = x^2 + 2xy+y, \quad q = y^2 - x - y . \]
Each of them defines a conic in the affine plane $\AA^2_k$ and
their intersection is  a set of four points.

We introduce a new parameter $t$, and perform the coordinate change
\[ x \mapsto \frac{1}{t^3}x, \quad y \mapsto \frac{1}{t^2} y. \]
The polynomials then become:
\[p_t = \frac{1}{t^6}x^2 + 2\frac{1}{t^5} xy + \frac{1}{t^2}y, \quad
  q_t = \frac{y^2}{t^4} - \frac{1}{t^3}x - \frac{1}{t^2}y. \]
They generate a family of ideals as $t$ varies. As long as $t  \neq 0$,
we may multiply with $t^6$ and $t^4$ and they generate the same ideals as:
\[ \overline{p_t} = x^2 + 2txy + t^4y,
  \quad \overline{q_t} = y^2 -tx-t^2y. \]
Now let $t \pil 0$. We then get the degenerate ideal generated by:
\begin{equation} \label{eq:pol-xy} x^2, y^2. \end{equation}
This is not a radical ideal anymore. Its zero set is now only the origin.
Geometrically it is nevertheless fruitful to
think of this as the origin ``fattened'' up
to a fat point of size $4$.
The ideal generated by $p$ and $q$ has now been {\it degenerated} to
the ideal $I$ generated by $x^2, y^2$.

\medskip
\noindent {\it Question.} Is the ideal $I$ as degenerate as
it gets, or is it possible to degenerate it still further?

\medskip
It turns out one can degenerate even more.

\medskip
\subsection{Monomial ideals and preorders}
Before proceeding with the above example, let us pause and discuss
monomial ideals. For a set $X$ let $k[X]$ be the polynomial ring where
the variables are the elements of $X$.
A monomial ideal in $k[X]$ is an ideal generated by monomials.
To each monomial ideal in $k[X]$ there is associated
a distinguished preorder on $X$, as we now explain.

Define a relation $R \sus X \times X$ by $(x,y) \in R$
if the following holds: 
Whenever $xm$ is a monomial in $I$ (a monomial with $x$ as a factor),
then $ym$ is also a monomial in $I$.
Clearly this relation is reflexive: All $(x,x) \in R$. It is also
transitive: If $(x,y)$ and $(y,z)$ are in $R$, then $(x,z) \in R$.
Som this is a preorder, and we get a map:

\begin{equation} \label{eq:pol-montopre}
  \text{Monomial ideals in } k[X] \lpil \Pre(X).
\end{equation}

The explicit formulation of this map is likely new.
But closely related is \cite{FMS},
where the point of view is
to start with a partial order $P$, and investigate monomial ideals
whose associated preorders are $\succeq P$.

For the examples below note that the symmetric group $S(X)$ of $X$,
consisting of the permutations of $X$, acts on the set of monomials in
$k[X]$ and so also on its set of monomial ideals. The symmetry group
of a monomial ideal $I$ is its stabilizer, i.e. the permutations $g \in S(X)$
such that $g.I = I$. 

\begin{example}
  The preorder associated to $I = (x^2, y^2)$
  is the discrete preorder. More generally the preorder associated to
  $I = (x^2 \, | \, x \in X)$ is the discrete preorder. However,
  the symmetry group of this ideal is the full group $S(X)$ of
  permutations of $X$. 
\end{example}

\begin{example}
  The preorder associated to the power ideal $I = (x \, | \, x \in X)^r$
  is the coarse preorder where $x \leq y$ for all $x,y \in X$. 
  The symmetry group of this ideal is the full group $S(X)$ of
  permutations of $X$. 
\end{example}

\begin{example} Let $X = \{x,y,z\}$. To the ideal
$I = (x, y^2, yz, z^3)$ is associated the total order $x > y > z$.
The symmetry group of this ideal only consists of the identity map.
\end{example}

\subsection{Families of ideals and algebraic sets, continued}
On the ideal $I = (x^2,y^2)$ 
perform the coordinate change $y \pil y + \frac{1}{t}x$.
The gives now the ideal $I_t$ generated by:
\begin{equation} \label{eq:pol-xyt} x^2, \quad (y + \frac{1}{t}x)^2 = \frac{1}{t^2} x^2 + 2 \frac{1}{t} xy +
  y^2. \end{equation}
Multiplying with $t^2$ and let $t \pil 0$ the limit of both polynomials
is $x^2$. This is not good geometrically. The zero set is now blown up
to a line.
There is a good limit ideal $I_0$ with the same ``size'' as the $I_t$'s,
but to see its generators we
need to throw extra generators into \eqref{eq:pol-xyt}.

Note that the generators also give the following combination:
\[ (y- 2\frac{1}{t}x) ( \frac{1}{t^2} x^2 + 2 \frac{1}{t} xy + y^2)
  + (2\frac{x}{t^3} + 3\frac{y}{t^2}) x^2 = y^3. \]
The ideal generated by \eqref{eq:pol-xyt}, when $t \neq 0$
is the same as the ideal generated by the three polynomials:
\[ x^2, xy + \frac{t}{2} y^2, y^3. \]
Now let $t \pil 0$. We get the generators:
\begin{equation} \label{eq:pol-xy0}  x^2, xy, y^3. \end{equation}
This {\it is} a generating set for the correct limit ideal $I_0$.
This ideal is also
{\it more} degenerate than the ideal $I$ generated by $x^2, y^2$:
One can {\it not} perform coordinate changes on $I_0$ and get the ideal
$I$ as a limit ideal.

\medskip
\noindent{\it Question.} Can one further degenerate the ideal $I_0$
generated by \eqref{eq:pol-xy0}?

\medskip
The answer turns out to be no. The ideal $I_0$ cannot be further degenerated.
Why is this so, and which ideals have this property?

\medskip

\subsection{Parameter spaces: The Hilbert schemes}
First let us give some theoretical insight into what is going on.
There is a good notion of parameter spaces for graded ideals in a
polynomial ring $k[x_1, \ldots, x_n]$.
These are the {\it Hilbert schemes} $H$, which are themselves algebraic sets. 
Each point $p \in H$ corresponds to a graded ideal $I_p$ in
$k[x_1, \ldots, x_n]$. 

The general linear group $\gln$ acts by coordinate changes on
ideals. For an ideal $I$ in $k[x_1, \ldots, x_n]$, a $g \in \gln$
gives by coordinate change a new ideal $g.I$.
Acting on $I_p$, the ideal $g.I_p$ will be a new graded ideal $I_{q}$
parametrized by the point $q$. We can then define an action of $\gln$
on $H$ by $g.p = q$.

Algebraic sets are endowed with a topology, the {\it Zariski topology},
and so also the Hilbert scheme $H$. For a subset $Z$ of $H$, denote by
$\overline{Z}$ its closure.
If the orbit $\cO(p)$ by $\gln$ is not closed, its boundary
$\overline{\cO(p)} \backslash \cO(p)$ contains a point $p^\prime$ and
the ideal $I_{p^\prime}$ corresponds to a more degenerate ideal than $I_p$.
The {\it most degenerate ideals} $I_p$ are then those ideals such that the
orbit $\cO(p)$ is closed.
As any ideal can be degenerated to a monomial
ideal, any such closed orbit contains a monomial ideal.
The monomial ideals in such orbits
can be described precisely.



\subsection{Most degenerate ideals and Borel-fixed ideals}
An ideal which is as degenerate as possible, is a monomial
ideal with a ``direction'':

\begin{theorem} \label{thm:pol-totpre} (char.$k = 0$) A monomial
  ideal is a most degenerate ideal
  (i.e. it cannot be degenerated further, its orbit on the Hilbert scheme
  is closed), iff the associated preorder by \eqref{eq:pol-montopre}
  is a {\bf total preorder}.
\end{theorem}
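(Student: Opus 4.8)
The plan is to translate the geometric condition ``the orbit of $I$ is closed'' into a statement about the stabilizer of $I$ under coordinate changes, and then read that statement off, via the dictionary of Section~\ref{sec:glv}, as ``the associated preorder is total''. Write $X=\{x_1,\dots,x_n\}$, let $\leq$ denote the preorder attached to the monomial ideal $I$ by \eqref{eq:pol-montopre}, let $\text{Stab}(I)\sus GL(X)$ be the stabilizer of $I$ for the coordinate-change action, and let $\cO(I)$ be its orbit in the Hilbert scheme $H$. I would use two standard facts: $H$ is a \emph{projective} variety, and for the connected group $GL(X)$ one has $GL(X)/K$ complete $\iff$ $K$ parabolic $\iff$ $K$ contains a Borel subgroup. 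Since $\cO(I)\iso GL(X)/\text{Stab}(I)$ is locally closed in $H$ (and in characteristic zero orbit maps are separable, so this is a genuine isomorphism), $\cO(I)$ is closed in the projective $H$ iff it is complete iff $\text{Stab}(I)$ is parabolic. So the whole theorem reduces to: \emph{$\text{Stab}(I)$ is parabolic $\iff$ the associated preorder $\leq$ is total.}

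The heart of the argument — and the only place characteristic zero enters — is the following computation. For $i\neq j$ let $u_{ij}(c)$ be the coordinate change $x_i\mapsto x_i+cx_j$ fixing all other variables. I claim $u_{ij}(c).I=I$ for all $c$ iff $x_i\leq x_j$ in the associated preorder. Indeed, for a monomial $\mu=x_i^{a}\nu\in I$ with $x_i\nmid\nu$ one has $u_{ij}(c).\mu=\sum_{k=0}^{a}\binom{a}{k}c^{k}\,x_i^{a-k}x_j^{k}\nu$; because $I$ is a monomial ideal and, in characteristic zero, the coefficients $\binom{a}{k}$ are nonzero, invariance of $I$ forces every $x_i^{a-k}x_j^{k}\nu$ into $I$. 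Taking $k=1$ and setting $m=x_i^{a-1}\nu$ shows $x_j m\in I$ whenever $x_i m\in I$, i.e. $x_i\leq x_j$; conversely, if $x_i\leq x_j$ one peels off the powers of $x_i$ one at a time to put all the $x_i^{a-k}x_j^{k}\nu$ into $I$, so $u_{ij}(c).I=I$. Since $c\mapsto u_{ij}(c)$ is a connected subgroup through the identity, it lies in $\text{Stab}^\circ(I)$ exactly when it fixes $I$.

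Now I would assemble the proof. As $I$ is a monomial ideal the diagonal torus $T$ fixes $I$, so $\text{Stab}(I)$ is a $T$-subgroup of $GL(X)$ in the sense of Section~\ref{sec:glv}; hence $\text{Stab}^\circ(I)=\glxo{}$ for a unique preorder, and by the previous paragraph the root subgroups $u_{ij}$ it contains are precisely those with $x_i\leq x_j$. Since a connected $T$-subgroup is generated by $T$ together with the root subgroups it contains, this identifies $\text{Stab}^\circ(I)$ with $\glxo{}$ for (the appropriate orientation of) the associated preorder $\leq$. A subgroup of $GL(X)$ is parabolic iff its identity component is (Borel subgroups are connected), and by the results of Section~\ref{sec:glv} the groups $\glxo{}$ that are parabolic are exactly those attached to total preorders. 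Chaining the equivalences: $\cO(I)$ closed $\iff$ $\text{Stab}(I)$ parabolic $\iff$ $\text{Stab}^\circ(I)=\glxo{}$ parabolic $\iff$ $\leq$ is a total preorder. This is the theorem.

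If one prefers to avoid the $T$-subgroup classification, the two directions can be done by hand using only the Claim. If $\leq$ is total, choose a linear order $x_{\sigma(1)},\dots,x_{\sigma(n)}$ on the variables refining the (totally ordered) poset of bubbles of $\leq$; then $x_{\sigma(b)}\leq x_{\sigma(a)}$ for all $a<b$, so by the Claim the $n(n-1)/2$ root subgroups $u_{\sigma(b)\sigma(a)}$ ($a<b$) all fix $I$, and together with $T$ they generate a Borel subgroup $B_\sigma$ of $GL(X)$; thus $B_\sigma\sus\text{Stab}(I)$, $\text{Stab}(I)$ is parabolic, and $\cO(I)$ is closed. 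Conversely, if $\cO(I)$ is closed then $\text{Stab}(I)$ is parabolic, hence contains a Borel; since it also contains $T$, conjugating inside $\text{Stab}(I)$ we may take that Borel to contain $T$, i.e. to equal $B_\sigma$ for some ordering $\sigma$, and then the Claim gives $x_{\sigma(b)}\leq x_{\sigma(a)}$ for all $a<b$, so any two variables are $\leq$-comparable and $\leq$ is total. The main obstacle, in either presentation, is not a calculation but the bookkeeping: matching the combinatorial preorder of \eqref{eq:pol-montopre} with the geometric preorder of $\text{Stab}^\circ(I)$ (this is exactly what the Claim supplies) and invoking in the correct form the two facts ``orbit closed $\iff$ stabilizer parabolic'' and ``$\glxo{}$ parabolic $\iff$ preorder total''. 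It is worth flagging, as the statement does, that characteristic zero is essential: in characteristic $p$ the vanishing of binomial coefficients breaks the Claim, and the most degenerate monomial ideals become the larger class of $p$-Borel-fixed ideals.
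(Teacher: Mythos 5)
Your proposal is correct, and it takes a more self-contained route than the paper. The paper obtains the statement as a corollary of Theorem \ref{thm:pol-borelclosed} (closed orbits are exactly the orbits of Borel-fixed ideals, via Borel's fixed point theorem), combined with the quoted fact that in characteristic zero Borel-fixed means monomial and strongly stable, and the observation that a monomial ideal has total associated preorder precisely when some permutation of the variables makes it strongly stable. You instead work directly with the stabilizer of the given monomial ideal: your binomial-coefficient computation for the root subgroups $u_{ij}(c)$ is exactly the classical ``Borel-fixed iff strongly stable'' argument, recast symmetrically in the language of the preorder \eqref{eq:pol-montopre}, and ``orbit closed iff stabilizer parabolic'' replaces the citation of Theorem \ref{thm:pol-borelclosed}. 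What your organization buys: no variable order has to be chosen, and you make explicit a step the paper leaves implicit, namely passing from ``the closed orbit contains \emph{some} Borel-fixed ideal'' to ``the \emph{given} monomial ideal has total preorder'' (in the paper's route one must still relate two torus-fixed points of the same orbit by a permutation; you instead conjugate the Borel inside $\text{Stab}(I)$, using that it contains $T$). What the paper's route buys is direct contact with the standard literature (strongly stable ideals, generic initial ideals) and reusable quoted results, including Borevich's classification, which your second, ``by hand'' presentation manages to avoid entirely. Two small caveats, neither a real gap: you implicitly use projectivity of the Hilbert scheme of graded ideals with fixed Hilbert function (the paper's own appeal to Borel's fixed point theorem needs the same completeness), and the orientation bookkeeping between your $u_{ij}$ and the convention $a_{xy}=0$ unless $x\leq y$ of Definition \ref{defi:glv-glxo} should be fixed once, as you yourself flag; your closing remark on characteristic $p$ and $p$-Borel-fixed ideals correctly locates where characteristic zero enters.
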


This formulation is new. It follows from Theorem \ref{thm:pol-borelclosed}
below, and we now give the traditional presentation of this.

\medskip
Put a total order $x_1 > x_2 > \cdots > x_n$ on the variables.
By Section \ref{sec:ass}, the associated
subgroup $B(n) = \glxo{}$ of $\gln$
is the subgroup of upper triangular matrices.

\begin{definition} \label{def:pol-borel}
  A graded ideal $I$ in $k[x_1, \ldots, x_n]$
  is {\it Borel-fixed} if $g.I = I$ for all $g$ in the Borel group
  of upper triangular matrices in $\gln$.
\end{definition}

\begin{definition} \label{def:pol-stst}
  A monomial ideal $I$ in $k[x_1, \ldots, x_n]$ is
  {\it strongly stable} if whenever $m$ is a monomial in $I$ and
  $m = x_j n$ and $x_i > x_j$ (i.e. $i < j$), then $x_in \in I$.
\end{definition}

As it turns out, in characteristic zero Definition \ref{def:pol-stst}
is a combinatorial
characterization of Definition \ref{def:pol-borel}:

\begin{fact} When $k$ has characteristic zero, a graded ideal $I$ is
  Borel-fixed iff it is monomial and strongly stable, \cite[Thm. 15.23]{Eis}
  or \cite[Prop. 4.24]{HeHi}.
\end{fact}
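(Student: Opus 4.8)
The plan is to prove the two implications separately, in each case reducing everything to the behaviour of $I$ under the diagonal torus $T$ and the one-parameter unipotent subgroups $U_{ij}$ ($i<j$), which act on $k[x_1,\ldots,x_n]$ by the substitution $x_j\mapsto x_j+\lambda x_i$ (all other variables fixed), $\lambda\in k$. Together, $T$ and the $U_{ij}$ generate the Borel group $B(n)$ of upper triangular matrices, by the usual Gaussian-elimination fact, and under the identification of Section \ref{sec:glv} of $B(n)$ with the upper triangular matrices for the order $x_1>\cdots>x_n$ the substitution above comes from an elementary strictly upper triangular matrix. So it suffices to test $g.I=I$ for $g\in T$ and for $g\in U_{ij}$.

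First I would treat ``monomial and strongly stable $\Rightarrow$ Borel-fixed'', which is in fact characteristic-free. The torus preserves every monomial ideal, since it only rescales monomials. For $U_{ij}$, take any monomial $m\in I$ and write $m=x_j^{a}p$ with $x_j\nmid p$; applying the substitution yields
\[
  (x_j+\lambda x_i)^{a}\,p \;=\; \sum_{k=0}^{a}\binom{a}{k}\lambda^{k}\,x_i^{k}x_j^{a-k}p ,
\]
and each monomial $x_i^{k}x_j^{a-k}p$ lies in $I$, because iterating the strongly stable rule $k$ times starting from $m=x_j^{a}p\in I$ replaces $k$ of the $x_j$'s by $x_i$'s. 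Hence $U_{ij}$ carries every monomial of $I$ into $I$, so $U_{ij}.I\subseteq I$; replacing $\lambda$ by $-\lambda$ (the inverse element) gives $U_{ij}.I=I$. Together with the torus this yields $g.I=I$ for all $g\in B(n)$, so $I$ is Borel-fixed.

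Next, the converse. From Borel-fixedness I would first deduce that $I$ is monomial: $I$ is stable under $T\cong(k^{*})^{n}$, and, working degree by degree, each graded piece of $k[x_1,\ldots,x_n]$ is a finite-dimensional $T$-representation whose weight spaces are exactly the lines spanned by the individual monomials (distinct monomials have distinct $T$-weights because $k$, having characteristic zero, is infinite), so a $T$-stable graded subspace is the span of the monomials it contains. Now let $m=x_j^{a}p\in I$ with $x_j\nmid p$, $a\ge 1$ and $i<j$; I must produce $x_in\in I$, where $n=m/x_j=x_j^{a-1}p$. Borel-fixedness gives, for every $\lambda$,
\[
  (x_j+\lambda x_i)^{a}\,p \;=\; \sum_{k=0}^{a}\binom{a}{k}\lambda^{k}\,x_i^{k}x_j^{a-k}p \;\in\; I ,
\]
and, $I$ being monomial, every monomial appearing here with nonzero coefficient lies in $I$. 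Choosing $\lambda\neq 0$, the $k=1$ term has coefficient $a\lambda$, which is nonzero \emph{because} $\operatorname{char} k=0$; hence $x_ix_j^{a-1}p=x_in\in I$, which is precisely the strongly stable condition.

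There is no deep obstacle here, the statement being classical, but the single place where the hypothesis $\operatorname{char} k=0$ is genuinely used, and the step I would be most careful about, is the very last one: the integer $a$ must be invertible in $k$. In characteristic $p$ this fails when $p\mid a$, and only the weaker ``$p$-Borel'' condition survives; the need for $k$ to be infinite in the torus step is the other, milder, use of the hypothesis. An entirely parallel argument replaces each $U_{ij}$ by the derivation $x_i\,\partial/\partial x_j$ ($i<j$), which together with the diagonal part spans the Lie algebra $\mathfrak{b}$ of $B(n)$: in characteristic zero, $B(n)$-stability of $I$ is equivalent to stability under $\mathfrak{b}$, and $x_i(\partial/\partial x_j)(x_j^{a}p)=a\,x_ix_j^{a-1}p$ exhibits the factor $a=\binom{a}{1}$ directly.
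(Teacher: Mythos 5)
Your proof is correct. The paper does not actually prove this Fact---it states it with references to \cite[Thm.\ 15.23]{Eis} and \cite[Prop.\ 4.24]{HeHi}---and your argument (reduce to the torus and the elementary unipotent subgroups $x_j\mapsto x_j+\lambda x_i$ generating $B(n)$, use the torus action over an infinite field to see $I$ is monomial, and use the binomial expansion with the coefficient $a=\binom{a}{1}$ invertible in characteristic zero to extract the strongly stable exchange) is essentially the standard proof given in those references, with the characteristic-zero hypothesis used exactly where it must be.
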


\begin{example}
  The ideal $J = (x^2, xy, y^3)$ is strongly stable for the
  monomial order $x > y$.
\end{example}

We see that a monomial ideal is strongly stable iff the associated
preorder is a total preorder in $\Pre(X)$ which is $\bsucceq$ the total order
\[ x_1 > x_2 > \cdots > x_n \]
in $\Pre(X)$. 
Note that the associated preorder of a monomial ideal is a total preorder
iff it is given by permuting the variables of a strongly stable ideal.




The following is shown in \cite{FShift}, but is really an old
and known result which follows from standard theory, Borel's fixed point
theorem \cite[Thm. 21.2]{Hum}. It is equivalent to Theorem \ref{thm:pol-totpre}.

\begin{theorem} \label{thm:pol-borelclosed}
  (char.$k = 0$) An orbit of $\gln$ on
  the Hilbert scheme $H$ is closed iff it is the orbit of a Borel-fixed
  ideal.
\end{theorem}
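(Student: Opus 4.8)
The plan is to deduce this from Borel's fixed point theorem, using the foundational input that the Hilbert scheme $H$ is a projective (hence complete) variety on which $\gln$ acts algebraically, with locally closed orbits and closed stabilizer subgroups. First I would fix the Hilbert polynomial so that the ideal under consideration, all of its coordinate changes, and all of its flat degenerations lie in one such $H$.

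The easy implication goes first. Suppose the orbit $\mathcal{O}(p)$ of a point $p \in H$ is closed; then, being a closed subvariety of the projective variety $H$, it is a complete variety. The Borel subgroup $B(n)$ of upper triangular matrices is connected and solvable, and it preserves $\mathcal{O}(p)$ (which is even $\gln$-invariant). Borel's fixed point theorem \cite[Thm. 21.2]{Hum} then produces a point $q \in \mathcal{O}(p)$ fixed by $B(n)$; the ideal $I_q$ is Borel-fixed in the sense of Definition \ref{def:pol-borel}, and $\mathcal{O}(q) = \mathcal{O}(p)$. So every closed orbit is the orbit of a Borel-fixed ideal.

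For the converse, suppose $I = I_p$ is Borel-fixed, so the stabilizer $\mathrm{Stab}_{\gln}(p)$ is a closed subgroup containing $B(n)$. A standard consequence of Borel's fixed point theorem is that a closed subgroup of $\gln$ containing a Borel subgroup is parabolic, so $\gln/\mathrm{Stab}(p)$ is a projective variety; the orbit map realizes $\mathcal{O}(p)$ as the image of this projective variety, hence $\mathcal{O}(p)$ is complete. Finally, a complete subvariety of the separated variety $H$ is closed: the inclusion $\mathcal{O}(p) \hookrightarrow H$ is proper (a complete variety is proper over the base and $H$ is separated), so its image $\mathcal{O}(p)$ is closed in $H$. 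This gives the equivalence.

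Two remarks on where the content lies. The whole argument is really just the dictionary ``complete orbit $\Leftrightarrow$ parabolic stabilizer,'' so the only substantial ingredients are (i) projectivity of the Hilbert scheme together with algebraicity of the $\gln$-action and local closedness of orbits --- foundational but entirely standard --- and (ii) Borel's fixed point theorem and the ensuing fact that overgroups of a Borel are parabolic. If one wanted a fully self-contained treatment, the main obstacle would be (i); granting it, everything else is soft. It then only remains to record that ``Borel-fixed'' as in Definition \ref{def:pol-borel} literally means ``the stabilizer contains the standard Borel,'' so no translation is needed, and that the statement is equivalent to Theorem \ref{thm:pol-totpre} via the characteristic-zero identification of Borel-fixed ideals with strongly stable monomial ideals (Definition \ref{def:pol-stst}) and the observation already made in the text that a monomial ideal is strongly stable for $x_1 > \cdots > x_n$ exactly when its associated preorder is a total preorder $\bsucceq$ that order, with arbitrary total preorders obtained by permuting the variables.
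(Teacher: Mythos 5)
Your argument is correct and follows exactly the route the paper indicates: the paper gives no written proof but cites \cite{FShift} and says the result ``follows from standard theory, Borel's fixed point theorem,'' which is precisely the two-step argument you spell out (a closed orbit is complete, so Borel's fixed point theorem yields a $B(n)$-fixed point in it; conversely a Borel-fixed ideal has parabolic stabilizer, so its orbit is the image of a complete variety $\gln/\mathrm{Stab}(p)$ and hence closed in the projective scheme $H$). Your write-up simply makes explicit the standard foundational inputs (projectivity of $H$, algebraicity of the action, overgroups of a Borel being parabolic) that the paper leaves to the references, so there is nothing to correct.
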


Borel-fixed ideals or strongly stable ideals are standard topics in
textbooks on computational or combinatorial commutative algebra,
\cite{Eis, HeHi, MS}. They
are essential for establishing results on the numerical behavior of
homogeneous ideals $I$ in polynomial rings, for instance the classical
results of Macaulay on what are the possible
Hilbert functions $h(n) = \dim_k I_n$ (dimension of the $n$'th graded piece)
of such ideals. They are essential in {\it shifting theory}
\cite[Section 11]{HeHi} where they are used to understand numerical behavior of
combinatorial objects, see \cite{KaBj} for a strong classification
result.

\subsection{Monomial orders}
For any homogeneous ideal in a polynomial ring there is a standard
way of getting directly to a degenerate ideal, by introducing a {\it monomial order} : This is a total order on the monomials in a polynomial ring with the 
following two simple extra requirements:
\begin{itemize}
\item $x_i > 1$ for each variable $x_i$,
\item If $m,n$ are monomials with $m > n$, then $x_im > x_in$ for each variable $x_i$.
\end{itemize}

With a mononomial order specified,
one gets the theory of Gröbner bases for ideal in polynomial rings,
\cite[Section 15]{Eis}, \cite{CoCoa}.
This is the central tool of computational algebra. We must however
emphasize that there are many monomial orders to choose from (a priori
none is canoncial), and
different orders have different advantages for computational tasks.
The setting of a monomial order is thus quite orthogonal to the
philosophy of this article. We aim at uncovering {\it naturally} occurring order
structures, while a monomial order must be {\it chosen}.

A monomial order gives for any ideal $I$ a distinguished direct
monomial ideal degeneration
(one arrives at it by a single one-parameter $t$ degeneration),
the {\it initial ideal} $\text{in}(I)$.
By performing a slight perturbation of
$I$ we have the following classical result by Andr\'e Galligo from
1974 \cite{Gal}, \cite[Thm. 15.20]{Eis}, which brings one
directly to a maximally degenerate ideal. 

\begin{theorem} Choose any monomial order on a polynomial ring
  $k[x_1, \ldots, x_n]$ 
  such that $x_1 > x_2 > \cdots > x_n$.
  Let $I$ be a homogenous ideal in this ring and
  perform a general coordinate change on $I$
  to get $g.I$. The initial ideal $\text{in}(g.I)$ is Borel-fixed.
  This is called the  {\it generic initial ideal} of $I$ with respect
  to the monomial order.
\end{theorem}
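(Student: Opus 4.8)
The plan is to verify the two facts behind the generic initial ideal: that $\text{in}(g.I)$ is one and the same monomial ideal $J$ for all $g$ in a nonempty Zariski-open subset of $\gln$, and that this $J$ is Borel-fixed. For the first, pass to the rational function field $K=k(a_{ij})$ and the generic matrix $\gamma=(a_{ij})$; a reduced Gröbner basis of $\gamma.I\sus K[x_1,\ldots,x_n]$ involves only finitely many elements of $K$, and on the open set $U\sus\gln$ where no denominator and no relevant leading coefficient of these vanishes the Buchberger computation specializes faithfully, so $\text{in}(g.I)=\text{in}(\gamma.I)=:J$ for all $g\in U$ (this is Galligo's theorem; a semicontinuity argument with Plücker coordinates gives the same). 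As monomials carry no coefficients, $J$ is a genuine monomial ideal over $k$.

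To see $J$ is Borel-fixed I would use that in characteristic zero ``Borel-fixed'' is equivalent to ``monomial and strongly stable'', and argue degree by degree. Put $J_d:=\text{in}\big((\gamma.I)_d\big)\sus S_d$. Since for a graded ideal every leading monomial is the leading monomial of a homogeneous element, $J=\big(\bigcup_d J_d\big)$; and the ideal generated by a family of strongly stable monomial subspaces is again strongly stable, because if a monomial of $J$ is written $bu$ with $b\in J_d$ and $u$ a monomial, a replacement of some $x_j$ in it by an $x_i$ with $i<j$ takes place either inside $u$, giving $b\cdot(x_iu/x_j)\in J$, or inside $b$, giving $(x_ib/x_j)u$ with $x_ib/x_j\in J_d$ by strong stability of $J_d$. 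So it suffices to show each $J_d$ is a strongly stable, equivalently $B(n)$-stable, subspace of $S_d$.

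Fix $d$ and view $W:=(\gamma.I)_d$ as a point $[W]$ of the Grassmannian $\mathrm{Gr}(\dim I_d,S_d)$, with the induced action of $\gln$, in particular of the Borel subgroup $B(n)$ and the diagonal torus $T$. Any monomial order agrees with a weight order on a given finite set of monomials, so one can choose a one-parameter subgroup $\lambda:\GG_m\pil T$ acting by $x_k\mapsto t^{w_k}x_k$ with integer weights $w_1<w_2<\cdots<w_n$ for which $\lim_{t\to0}\lambda(t).[W]=[J_d]$; being a limit of $T$-points, $[J_d]$ is $T$-fixed. It remains to see it is fixed by each root subgroup $U_{ij}=\{u_{ij}(s):x_j\mapsto x_j+sx_i\}$, $i<j$, since these and $T$ generate $B(n)$. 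A matrix computation gives $\lambda(t)^{-1}u_{ij}(s)\lambda(t)=u_{ij}\big(s\,t^{w_j-w_i}\big)$ with exponent $w_j-w_i>0$, hence
\[ u_{ij}(s).[J_d] \;=\; \lim_{t\to0} u_{ij}(s)\lambda(t).[W] \;=\; \lim_{t\to0}\lambda(t).\big(u_{ij}(s\,t^{w_j-w_i}).[W]\big). \]
As $t\to0$ the inner term tends to $[W]$, differing from it only by $t$-divisible contributions to its Plücker coordinates; since $\lambda(t)$ scales the Plücker coordinate of a monomial set $\Sigma$ by $t^{\sum_{m\in\Sigma}w(m)}$, the limit selects the same minimal-weight coordinate that already survives in $\lim_{t\to0}\lambda(t).[W]=[J_d]$. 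Thus $u_{ij}(s).[J_d]=[J_d]$, so $[J_d]$ is $B(n)$-fixed, $J_d$ is strongly stable, and letting $d$ range over all degrees shows $\text{in}(g.I)$ is strongly stable, hence Borel-fixed.

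The crux is that last limit: one must be sure that the $t$-divisible perturbation of $[W]$ coming from $u_{ij}(s\,t^{w_j-w_i})$ is dominated by the positive weight gaps, so that the $\lambda(t)$-limit is unaffected. This is exactly where the inequality $w_j-w_i>0$ for $i<j$---that is, the compatibility of the Borel group with the order $x_1>x_2>\cdots>x_n$---enters, together with choosing $\lambda$ generic enough within the cone of weights realizing the monomial order that the minimal-weight Plücker coordinate of $[W]$ is unique. The rest is bookkeeping, and the argument is the classical one of Galligo; see \cite[Thm.\ 15.20]{Eis}.
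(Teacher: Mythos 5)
Your first step (constancy of $\text{in}(g.I)$ on a dense open $U \subseteq GL(n)$ via the generic matrix over $k(a_{ij})$ and specialization of a reduced Gr\"obner basis) is fine in outline, as is the reduction of Borel-fixedness to strong stability of each graded piece $J_d$. The genuine gap is in the main step, the claimed $B(n)$-fixedness of $[J_d]$. The identity $u_{ij}(s)\lambda(t)=\lambda(t)u_{ij}(s\,t^{w_j-w_i})$ followed by $t\to 0$ is nothing more than continuity of the action: the right-hand side converges to $u_{ij}(s).[J_d]$, not to $[J_d]$, and the assertion that the $t^{w_j-w_i}$-small perturbation is ``dominated'' is false. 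Each substitution $x_j\mapsto x_i$ lowers the weight of a Pl\"ucker coordinate by exactly $w_j-w_i$, which exactly cancels the factor $t^{w_j-w_i}$, so the perturbation enters at order $t^0$ and survives in the limit. Concretely, take $n=2$, $d=1$, $W=\langle x_2\rangle$: then $\lim_{t\to 0}\lambda(t).[\,x_2+s\,t^{w_2-w_1}x_1\,]=[\,x_2+sx_1\,]\neq[x_2]$. Since your limit argument nowhere uses that $W$ is a \emph{generic} translate $( \gamma.I)_d$, it would equally ``prove'' that $\text{in}(I)$ is Borel-fixed for every homogeneous ideal, which is false (e.g.\ $I=(x_2)$). (Also, ``a limit of $T$-points is $T$-fixed'' is not a valid inference --- the points $\lambda(t).[W]$ are not $T$-fixed --- although $[J_d]$ is of course $T$-fixed simply because it is a span of monomials.)

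Genericity is precisely what must carry the Borel-fixedness, and the paper itself does not prove the statement but cites Galligo and \cite[Thm.\ 15.20]{Eis}, where the argument runs as follows: for $b\in B(n)$ and $g$ general, both $g$ and $bg$ lie in the open set $U$, hence $\text{in}\bigl(b.(g.I)\bigr)=\text{in}(g.I)=J$; since $b$ is upper triangular for the order $x_1>\cdots>x_n$, it sends each monomial $m$ to a nonzero scalar times $m$ plus strictly larger monomials, so it acts triangularly on the wedge (Pl\"ucker) coordinates of $(g.I)_d$; comparing the extremal nonvanishing wedge coordinate of $(g.I)_d$ with that of $u_{ij}(s).(g.I)_d$ for all $s$, and using that the former is already maximal on the open set, forces the vanishing identities that are exactly strong stability of $J_d$. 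Your proposal needs the conjugation-limit step replaced by an argument of this kind (or by an appeal to Borel's fixed point theorem together with closedness of the orbit, as in the paper's Theorem \ref{thm:pol-borelclosed}); as written, the key step does not hold.
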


That one can thus directly arrive at a Borel-fixed ideal (and possibly
many by varying the monomial order) has given the idea that these
could give an approach to classify the components of the parameter
spaces, Hilbert schemes, for homogeneous ideals \cite{FlRo, Lella, RR, RS}.
A classification appears very intricate however, and must
be said to be at a rudimentary level.





\subsection{Strongly stable ideals as up-sets}
Monomials in $k[x_1, \ldots, x_n]$ are in bijection with $\NN^n$
where $\NN = \{ 0, 1, 2, \ldots \}$, or equivalently with
$\Hom(\overline{n}, \NN)$ where $\overline{n}$ is the anti-chain
$\overline{n} = \{ 1, 2, \ldots, n \}$. For $\bfu = (u_1,u_2,
\ldots, u_n)$ the corresponding monomial is $x^\bfu = x_1^{u_1}x_2^{u_2}
\ldots x_n^{u_n}$. Monomial ideals in $k[x_1, \ldots, x_n]$ are in
bijection with up-sets $U \sus \Hom(\overline{n}, \NN)$, with the monomial
ideal consisting of all $x^{\bfu}$ with $\bfu \in U$.

\medskip
Let $[n] = \{ 1 < 2 < \ldots < n \}$. The monomials in $k[x_1, \ldots, x_n]$
are also in bijection with order-preserving maps $\Hom([n],\NN)$.
For $u_1 \leq u_2 \leq \cdots \leq u_n$ in the latter, the corresponding
monomial is $x_1^{u_1}x_2^{u_2 - u_1} \cdots x_n^{u_n - u_{n-1}}$. As it turns out
strongly stable ideals are precisely
in bijection with up-sets $U \sus \Hom([n],\NN)$,
\cite{FPP, FShift} or first observed in \cite{FGH}.
Moreover, given a total preorder $T$, the strongly stable
ideals whose associated preorder is $\succeq \, T$, correspond
precisely to the up-sets in $\Hom(T^\op, \NN)$.

\section{Bipartite graphs}
\label{sec:bip}

To combinatorial objects one may often associate algebraic objects.
In particular, simplicial complexes have an associated Stanley-Reisner
ring, \cite[Section 1]{HeHi}. So, one may consider algebraic conditions on the ring and investigate
what this means for the combinatorial object.
For commutative rings a basic niceness condition is that of being
{\it Cohen-Macaulay}. It has various equivalent definitions, often adapted
to various specializations.
We give one of them which does not require much technical background, and
investigate this in a basic combinatorial setting.

\subsection{Graph ideals}
Let $G = (E,X)$ be a {\it simple} graph with edges $E$ and vertices $X$.
Let $k[X]$ be the polynomial ring whose variables are the $x \in X$. 
To each edge $\{v,w\} \in E$ there is a quadratic monomial $vw \in k[X]$.
Let $I(G)$ be the monomial ideal generated by all such $vw$ coming from
edges in $G$, the {\it edge ideal} of $G$, and let $k[G] = k[X]/I(G)$
be the quotient algebra, the {\it edge ring} of $G$.

\begin{example} \label{eks:bip-abcd} The graph
\[ \linegraph \]
gives the edge ideal
\[ I_1 = (ab,bc,cd) \sus k[a,b,c,d]\]
and the edge ring $A_1 = k[a,b,c,d]/(ab,bc,cd)$.

We may associate geometry to this ideal, by taking the common zeros
$Z(ab,bc,cd)$ of the monomials in $I_1$. This is an algebraic set
living in ${\mathbb A}^4_k$, the affine four-space.
Denoting such points by
$\bfu = (u_a,u_b,u_c,u_d)$, note that if $\bfu$ is in the zero set
then also $t \bfu$ is in the zero set, so we may consider the zero set
to live in the projective three-space ${\mathbb P}^3$.
It is easy to see that the zero set is then the union of the three
projective lines
\[ Z(a,c) \cup Z(c,b) \cup Z(b,d) \]
where for instance $Z(a,c)$ is the projective line where the
coordinates $a = 0$ and $c = 0$, so it consists of points
$(0,t,0,s)$ ($s,t$ not both zero) in ${\mathbb P}^3$.
\end{example}

\begin{example}
  The graph
  \[ \squaregraph \]
gives the edge ideal $I_2 = (ab,bc,cd,da)$ and
edge ring $A_2 = k[a,b,c,d]/I_2$.
And the zero set of $I_2$ is the union of two disjoint lines in
${\mathbb P}^3$:
\[ Z(a,c) \cup Z(b,d). \]
\end{example}

Since the generators of the ideals $I_1$ and $I_2$ are homogeneous polynomials,
these are graded ideals, and the quotient rings are graded rings
$A_1$ and $A_2$ 
(more specifically graded $k$-algebras).

\subsection{Some notions from commutative algebra}
Let us recall a few notions from commutative algebra. 
A ring $A$ is {\it graded} if it can be written
\[ A = A_0 \oplus A_1 \oplus \cdots \oplus A_n \oplus \cdots  \]
where the $A_i$ are abelian groups, and the products $A_i \cdot A_j
\sus A_{i+j}$. If $A$ is a $k$-algebra, the $A_i$ are $k$-vector spaces.

An element $a$ in a commutative ring is a {\it nonzerodivisor} if
$a b = 0$ implies $b = 0$. A sequence of elements of $A$:
$a_1, a_2, \ldots, a_n$ is {\it regular} if each $a_i$ is a nonzerodivisor
in the quotient ring $A/(a_1, \ldots, a_{i-1})$ and these quotient rings
are all non-zero, for $i = 1, \ldots, n+1$.

A graded $k$-algebra $A$ is {\it Cohen-Macaulay} if there is a regular
sequence $a_1, a_2, \ldots, a_n$ such that $A/(a_1,a_2, \ldots, a_n)$
is finite-dimensional as a vector space over $k$. This is a basic {\it niceness}
condition on graded $k$-algebras. The Krull dimension of $A$ is then said
to be $n$, the length of the regular sequence. 

\begin{example} For the ring $A_1$ above, $a-b, c-d$ is a regular
sequence. The quotient ring
\[ A_1/(a-b,c-d)  \iso k[a,c]/(a^2, ac, c^2) \iso k \oplus
\langle a, c \rangle \]
is a three-dimensional vector space.
\end{example}

\begin{example} For the ring $A_2$, for the sequence $a-b,c-d$, the
first $a-b$ is a nonzerodivisor in $A_2$, but $c-d$ is a zerodivisor
in
\[ A_2/(a-b) \iso k[a,c,d]/(a^2, ac, ad, cd) \]
as $(c-d)a = 0$ but $a \neq 0$ in $A_2/(a-b)$.

One may show there is no regular sequence in $A_2$ giving a quotient
ring which is a finite-dimensional vector space, so $A_2$ is not
Cohen-Macaulay.
\end{example}

Geometrically the reason why $A_2$ is not Cohen-Macaulay is that the
zero set of the ideal is two non-intersecting lines. In general, if
the zero set is an algebraic subset which is not "sufficiently connected",
this implies it is not Cohen-Macaulay.

\medskip
Edge ideals of graphs $I(G)$ were first introduced in 1990 by Raphael Villarreal
\cite{Vill}. Since then, it has been an active topic in combinatorial
commutative algebra to understand algebraic properties of edge rings
and connect it to combinatorial properties of graphs \cite{MoVill},
to build a dictionary between commutative algebra and graph theory.
For instance,
minimal vertex covers of a graph corresponds to minimal associated
prime ideals of the edge ideal $I(G)$. 
Somewhat surprisingly, for a graph $G$ in general,
it seems as difficult question as anything
to find a general criterion on $G$ for the edge ring $k[G]$ to be
Cohen-Macaulay.
But, for one class of graphs we have a good answer: Bipartite graphs.

\subsection{When are bipartite graphs Cohen-Macaulay?}
A simple graph is {\it bipartite} if there is a disjoint union
$X = A \sqcup B$ such that every edge in $G$ has one vertex in $A$
and the other in $B$. An edge $\{a,b\}$ (with $a \in A, b \in B$)
may be identified with the ordered pair $(a,b)$.
Hence a bipartite graph is equivalent to a subset $R \sus A \times B$,
that is, a {\it relation} between $A$ and $B$.
We now characterize when $k[G]$ is Cohen-Macaulay for a bipartite graph.

Let $(V,\geq)$ be a partially ordered set.
The partial order is relation
$R \sus V\times V$, and we write $v \leq w$ if $(v,w) \in R$.
For $i = 1,2$ write $V_i = V \times \{i\}$.
It induces a bipartite
graph with vertices $X = V_1 \sqcup V_2$, and edges $\{ (v,1),(w,2) \}$ when
$v \leq w$. Let $L$ be the associated edge ideal.

\begin{theorem} \label{thm:bip-VCM}
  The edge ring $k[V_1 \sqcup V_2]/L$ is a Cohen-Macaulay
  ring. Its Krull dimension equals the cardinality of $V$.
\end{theorem}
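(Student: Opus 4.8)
The plan is to realize the ring $R := k[V_1 \sqcup V_2]/L$ as the Stanley--Reisner ring of the independence complex $\Delta$ of the bipartite graph $G$, and then to deduce Cohen--Macaulayness by Alexander duality, following the strategy of \cite{HeHi-bip}. Write $x_v$ and $y_v$ for the variables attached to $(v,1) \in V_1$ and $(v,2) \in V_2$, so that $L = (x_v y_w : v \leq w \text{ in } V)$. This is precisely the Stanley--Reisner ideal of the complex $\Delta$ whose faces are the independent vertex sets of $G$, and for any such complex $\dim R = \dim \Delta + 1$.

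First I would describe the facets of $\Delta$. An independent set of $G$ has the form $A \times \{1\} \sqcup B \times \{2\}$ with $A, B \subseteq V$ and no $v \in A$, $w \in B$ with $v \leq w$; taking $v = w$ forces $A \cap B = \emptyset$. A short argument shows a \emph{maximal} such set has $A \sqcup B = V$ with $A$ an up-set (equivalently $B$ an order ideal) of $P := (V,\geq)$: if some $z$ lies in neither part and can be adjoined to neither, then there are $v \in A$ and $w \in B$ with $v \leq z \leq w$, contradicting independence. Conversely every up-set gives a facet. Hence the facets of $\Delta$ are in bijection with the order ideals of $P$, that is, with the distributive lattice $J(P)$ (Birkhoff's theorem, Section \ref{sec:top}), and each facet has exactly $|V|$ vertices. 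So $\Delta$ is pure of dimension $|V|-1$ and $\dim R = |V|$, which is the Krull dimension assertion.

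Next I would compute the Alexander dual ideal $L^\vee$: it is minimally generated by the squarefree monomials supported on the complements of the facets of $\Delta$. Writing a facet as $A \times \{1\} \sqcup B \times \{2\}$ with $B = V \setminus A$ an order ideal, the corresponding generator is $\prod_{v \in B} x_v \cdot \prod_{w \notin B} y_w$, one for each order ideal $B$ of $P$; this is the \emph{Hibi ideal} of $P$, whose generators all have degree $|V|$. The crux is to show $L^\vee$ has \emph{linear quotients}: order its generators $u_{I_1}, u_{I_2}, \dots$ along a linear extension of $(J(P), \subseteq)$. For $a < b$ one cannot have $I_b \subseteq I_a$, so $I_b \setminus I_a \neq \emptyset$; choose $j$ maximal in $I_b \setminus I_a$. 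If some $k > j$ were in $I_b$, then maximality of $j$ in $I_b \setminus I_a$ would put $k \in I_a$, and then $j \in I_a$ since $I_a$ is a down-set, which is impossible; hence $j$ is maximal in $I_b$ and $I_b \setminus \{j\}$ is again an order ideal, occurring earlier in the list. Then $u_{I_b \setminus \{j\}} : u_{I_b} = y_j$ is a single variable, and it divides $u_{I_a} : u_{I_b}$ because $j \in I_b \setminus I_a$. Thus each colon ideal $(u_{I_1},\dots,u_{I_{b-1}}) : u_{I_b}$ is generated by variables.

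Finally, a monomial ideal generated in a single degree with linear quotients has a linear resolution, so $L^\vee$ has a linear resolution; by the Eagon--Reiner theorem this is equivalent to $R = k[V_1 \sqcup V_2]/L$ being Cohen--Macaulay, which together with the dimension count finishes the proof. The step I expect to be the main obstacle is the linear-quotients verification for the Hibi ideal: it rests on $J(P)$ being closed under deleting a maximal element of an order ideal, exactly the Birkhoff-type lattice structure that ties this example back to the finite-topology discussion. An alternative route, avoiding Alexander duality, is to prove directly that $\Delta$ is shellable by ordering its facets along a linear extension of $J(P)$ and checking the shelling condition with the same maximal-element manipulation; this yields Cohen--Macaulayness over every field as well.
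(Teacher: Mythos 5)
Your proof is correct, but it takes a genuinely different (and much heavier) route than the paper. The paper's own argument is the direct one it sketches after the theorem: the $|V|$ linear forms $(v,1)-(v,2)$, $v\in V$, form a regular sequence on $k[V_1\sqcup V_2]/L$, and the resulting quotient is $k[V]/(vw \mid v\leq w)$, which is finite-dimensional over $k$ because it contains every $v^2$; this matches the paper's definition of Cohen--Macaulayness verbatim and yields the Krull dimension $|V|$ in one stroke. You instead identify the edge ring with the Stanley--Reisner ring of the independence complex, show its facets are indexed by the order ideals of $(V,\leq)$ (your maximality argument and the purity/dimension count are fine), pass to the Alexander dual --- the Hibi ideal --- verify linear quotients along a linear extension of $J(P)$ (the maximal-element manipulation is correct), and conclude via linear resolution plus Eagon--Reiner; your shellability variant also works with the same combinatorial step. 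What your route buys: it is characteristic-free, it exhibits the distributive-lattice structure $J(P)$ that is the real engine behind the harder converse, Theorem \ref{thm:bip-poset}, and it effectively reproves the forward direction in the style of \cite{HeHi-bip}. What it costs: Alexander duality, the linear-quotients-implies-linear-resolution fact, and Eagon--Reiner are substantial imports for a statement the paper settles with an explicit regular sequence of linear forms; if you want a short self-contained proof matching the paper's definition of Cohen--Macaulay, the regular-sequence argument is the one to write down (and note your shelling alternative likewise gives field-independence, so the extra generality is not unique to the duality route).
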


The proof of this is rather straightforward. It is not hard to show
that the sequence of variable differences $(v,1) - (v,2)$ for $v \in V$
forms a regular sequence. The final quotient ring is isomorphic to
\[ k[V]/(vw \, | \, v \leq w). \]
This quotient ring is finite-dimensional as a vector space over $k$, 
as the quotienting ideal contains the second power
$v^2$ of each variable $v \in V$.
The amazing thing is that {\it every} Cohen-Macaulay edge ideal is
of this kind: The edge ring is Cohen-Macaulay precisely when the relation between
$A$ and $B$ is a partial order \cite{HeHi-bip}, \cite[Sec.9.1.3]{HeHi}

\begin{theorem} \label{thm:bip-poset}
  Let $G$ be a bipartite graph with vertex partition $A \sqcup B$.
  The edge ring $k[G]$ is a Cohen-Macaulay ring iff there is a bijection
  $A \iso B$ such that the bipartite graph, considered as a relation
  between $A$ and $B$, may be identified with a {\bf partial
  order} on $A$. 
\end{theorem}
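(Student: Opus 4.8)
The ``if'' direction is exactly Theorem~\ref{thm:bip-VCM}: a partial order $(V,\leq)$ produces the bipartite graph with edges $\{(v,1),(w,2)\}$ for $v\leq w$, and there it is shown that this edge ring is Cohen--Macaulay of Krull dimension $|V|$. So the content is the converse, and I would organise it in three steps. Throughout I assume $G$ has no isolated vertices (an isolated vertex merely tensors $k[G]$ with a polynomial variable).

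\emph{Step 1: Cohen--Macaulay implies unmixed, hence a perfect matching.} The edge ideal $I(G)$ is the Stanley--Reisner ideal of the independence complex $\Delta(G)$, whose faces are the independent sets and whose facets are the maximal independent sets, i.e.\ the complements of the minimal vertex covers. A Cohen--Macaulay complex is pure, so all minimal vertex covers of $G$ have a common size $c$, and $\dim k[G] = n - c$ with $n = |A \sqcup B|$. Since $G$ has no isolated vertex, both $A$ and $B$ are minimal vertex covers; hence $|A| = |B| =: m$ and $c = m$. If $G$ had no perfect matching, Hall's marriage theorem would give $S \sus A$ with $|N(S)| < |S|$, and then $(A \setminus S) \cup N(S)$ would be a vertex cover of size $< m$, whose minimal subcover would have size $< m = c$ --- a contradiction. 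So $G$ has a perfect matching; relabel $A = \{a_1,\ldots,a_m\}$, $B = \{b_1,\ldots,b_m\}$ with each $a_i b_i \in E$. Identifying $B$ with $A$ via $b_i \leftrightarrow a_i$, the bipartite graph becomes a \emph{reflexive} relation $\preceq$ on $A$, where $i \preceq j$ iff $a_i b_j \in E$. It remains to show $\preceq$ is a partial order.

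\emph{Step 2: Minimal vertex covers correspond to up-sets of $\preceq$.} A minimal vertex cover $C$ has size $m$ and so contains exactly one endpoint of each matching edge; writing $S_C = \{i : b_i \in C\}$, one checks that $C$ is a vertex cover precisely when $i\preceq j$ forces ``$i \notin S_C$ or $j \in S_C$'', i.e.\ precisely when $S_C$ is an up-set of $(A,\preceq)$; conversely every up-set $S$ gives a cover $C_S = \{a_i : i\notin S\}\cup\{b_i : i\in S\}$ of size $m$, automatically minimal. Thus the minimal vertex covers, ordered by inclusion of the $S_C$, form the distributive lattice of up-sets of $\preceq$ --- which, as in \S\ref{sec:top}, depends only on the preorder $\widehat{\preceq}$ generated by $\preceq$, i.e.\ on the associated quotient poset. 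Dually, the Alexander dual $I(G)^\vee$ is the cover ideal $J(G) = \bigcap_{i\preceq j}(a_i,b_j)$, minimally generated by the squarefree monomials $u_S = \prod_{i\notin S}a_i\prod_{j\in S}b_j$, one per up-set $S$, all of degree $m$.

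\emph{Step 3: Cohen--Macaulayness forces $\preceq$ to be a poset.} By the Eagon--Reiner theorem, $k[G]$ is Cohen--Macaulay iff $J(G)$ has a linear resolution, so I would prove: $J(G)$ has a linear resolution iff $\preceq$ is transitive and antisymmetric. The ``if'' part is the combinatorial heart of Theorem~\ref{thm:bip-VCM} read through Alexander duality: when $\preceq$ is a poset one orders the up-sets of $\preceq$ compatibly and exhibits linear quotients for $J(G)$. For the ``only if'' part --- the main obstacle --- one argues that a failure of antisymmetry ($i\preceq j\preceq i$ with $i\neq j$) forces an induced $4$-cycle on $\{a_i,a_j,b_i,b_j\}$, and that a failure of transitivity ($i\preceq j\preceq k$ with $i\not\preceq k$) likewise produces a configuration in $G$ incompatible with $J(G)$ having a linear resolution; this is precisely the delicate analysis carried out by Herzog and Hibi. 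Granting it, $\preceq$ is a partial order on $A$, the bijection is $a_i\leftrightarrow b_i$, and $G$ is identified with $\preceq$ on $A$. Pleasantly, by Step 2 the lattice of minimal vertex covers is then exactly the lattice of up-sets of this poset, so $G$ is recovered from $\preceq$ in the Birkhoff style of \S\ref{sec:top} --- which is why one cannot settle for the quotient preorder in Step~3: it would have fewer than $m$ elements and could not be in bijection with $A$.
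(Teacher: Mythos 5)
Your ``if'' direction is exactly the paper's: invoke Theorem \ref{thm:bip-VCM}. For the converse the paper offers no argument at all --- it simply cites Herzog--Hibi \cite{HeHi-bip} --- so the only question is whether your outline itself amounts to a proof. Steps 1 and 2 are correct and are the standard reductions: Cohen--Macaulayness makes the independence complex pure, hence $G$ is unmixed; with no isolated vertices $A$ and $B$ are minimal covers, so $|A|=|B|=m$ and Hall's theorem yields a perfect matching; minimal covers of size $m$ correspond to up-sets of the reflexive relation $\preceq$; and Eagon--Reiner converts the problem into whether the cover ideal $J(G)$ has a linear resolution. All of this is sound.

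The genuine gap is Step 3, which is the actual content of the theorem: you assert, and explicitly ``grant'' to Herzog and Hibi, that a failure of antisymmetry or transitivity of $\preceq$ is incompatible with $J(G)$ having a linear resolution. As written that is not a proof, and the one mechanism you hint at is too weak to carry it: the mere presence of an induced $4$-cycle cannot obstruct Cohen--Macaulayness, because poset graphs --- Cohen--Macaulay by Theorem \ref{thm:bip-VCM} --- contain induced $4$-cycles whenever two elements both lie below two other elements (the cycle $a_x,b_y,a_z,b_w$ for $x,z< y,w$). What has to be excluded is the $4$-cycle through two \emph{matching} edges (i.e.\ $i\preceq j\preceq i$ with $i\neq j$), and similarly the non-transitive configuration $i\preceq j\preceq k$ with $i\not\preceq k$; showing that a Cohen--Macaulay (equivalently, linearly resolved $J(G)$) bipartite graph admits neither requires a real argument --- some induction on the graph, a vertex-deletion/localization step, or a direct analysis of the (non)existence of linear quotients --- none of which is supplied. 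So your proposal is a correct road map that reduces Theorem \ref{thm:bip-poset} to its combinatorial core, but it stops exactly where the paper's citation begins, and the hinted shortcut via induced $4$-cycles would fail if taken literally.
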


\begin{example}
  Let $A = \{a,c\}$ and $B =\{b,d\}$. Let $V = \{v,w\}$ with the order relation $v \leq w$.
  We identify $B$ with $V_1$ and $A$ with $V_2$. 
  The edge ring in Example \ref{eks:bip-abcd}
  becomes the edge ring
  \[ k[V_1 \sqcup V_2]/((v,1)(v,2), (v,1)(w,2), (w,1)(w,2)) \]
  which is Cohen-Macaulay by Theorem \ref{thm:bip-VCM} above.
\end{example}

\begin{remark} A related niceness condition on monomial ideals is that
  of having a {\it linear resolution}. When does this hold for edge
  ideals of bipartite graphs? This is easier
  than the theorem above. This happens when $A$ and $B$ can be given total
  orders such that the relation $R \sus A \times B$ is a up-set for
  the partial order on $A \times B$.
\end{remark}

\subsection{Letterplace ideals} Theorems \ref{thm:bip-VCM} and
\ref{thm:bip-poset} have generated much activity in
combinatorial commutative algebra. 
We briefly sketch one direction having led to considerable generalizations,
which the author has been active in.
For a set $R$ let $k[R]$ be the polynomial ring whose variables are the
elements of $R$. Each subset $S \sus R$ gives a monomial
$m_S = \prod_{s \in S} s$.

For two preorders $P$ and $Q$, we have the polynomial ring $k[P \times Q]$.
Let $\phi : P \pil Q$ be an order preserving map and let $\Hom(P,Q)$ be the set
of such. The graph of $\phi$ is $\Gamma \phi = \{ (p,\phi(p))\, |\, p \in P\}$,
which is a subset of $P \times Q$. It gives a monomial $m_{\Gamma \phi}$
in $k[P \times Q]$. These monomials generated an ideal denoted $L(P,Q)$.

When $P = [n]$ and $Q = V$, a partially ordered set,
$L([n],V)$ is called a {\it letterplace ideal} \cite{FGH}.
These are Cohen-Macaulay ideals \cite{EHM}, \cite{FGH},
and are generated by monomials of degree $n$.

\begin{example} The ideals $L([2],V)$ are precisely the ideals given
  in Theorem \ref{thm:bip-VCM}.
\end{example}

A more general framework is given in \cite{FPP, FGH}. Consider
a {\it finite} down-set $\cI$ in $\Hom(V, \NN)$. The ideal $L(V;\cI)$ generated
by all $m_{\Gamma \phi}$ for $\phi \in \cI$ is called a {\it co-letterplace
  ideal}. Its Alexander dual monomial ideal $L(\cI;V)$ is a
(generalized) {\it letterplace ideal}, a class considerably generalizing the
$L([n],V)$. These ideals are also Cohen-Macaulay, and the simplicial complex
associated via the Stanley-Reisner correspondence is always a
{\it triangulated ball} \cite{DFN}.
In the recent \cite{FM} this led to a fundamental theorem on the
combinatorial geometry of monomial ideals: Any polarization of
an artin monomial ideal gives, via the Stanley-Reisner correspondence,
a simplicial complex which is a triangulated ball.

\appendix

\section{Functors and adjunctions}

In line with the general philosophy in mathematics, there should
be structure preserving maps between objects, and so also between
categories.

\subsection{Functors}

\begin{definition} Let $\cC, \cD$ be categories.
A {\it functor} $F : \cC \pil \cD$:
\begin{itemize}
\item Associates to each object $c$ in $\cC$ an object $F(c)$ in $\cD$.
\item To each pair of objects $c_1,c_2$ of $\cC$ there is a function
\[ \Hom(c_1,c_2) \mto{F} \Hom(F(c_1), F(c_2)) \] such that for morphisms
$c_1 \mto{f} c_2 \mto{g} c_3$ we have
$F(g \circ f) = F(g) \circ F(f)$.
\item $F(\ben_C) = \ben_{F(c)}$. 
\end{itemize}
\end{definition}

\begin{example}
If $\cC$ is a category with one object $c$ and similarly $\cD$ with
one object $d$, a functor $F : \cC \pil \cD$ is a map of
monoids $F: \Hom(c,c) \pil \Hom(d,d)$. If $\cC$ and $\cD$ are groups,
this is a group homomorphism.
\end{example}

\begin{example} If $P$ and $Q$ are preorders, then considering them
as categories, a functor $F : P \pil Q$ is simply an order preserving map
from $P$ to $Q$, i.e. if $a \leq b$ in $P$, then  $F(a) \leq F(b)$ in $Q$.
\end{example}

\subsection{Adjunctions}

A map of sets $f : X \pil Y$ has an inverse iff it is a bijection.
Often this will not be so, but there may be maps $g  : Y \pil X$ which
are approximations to an inverse. Furthermore, the approximation may be
from "above" or from "below".

Let $P,Q$ be preorders and consider an order-preserving map $g : P \pil Q$.
There may then be a map $f : Q \pil P$ which approximates 
an inverse of $g$ from "above". Precisely this is formulated as follows.
For $q \in Q$ and $p \in P$ we have:
\[ f(q) \leq p \text{ iff } q \leq g(p). \]
So given $g$ and $q \in Q$ we consider all $p$ with $q\leq g(p)$
and make $f(q)$ as large as possible $\leq $ these $p$.

Similarly, we may consider $h : Q \pil P$ which may be an approximation
from "below". Then
\[ g(p) \leq q \text{ iff } p \leq h(q). \]

\begin{example}

  Let $P$ and $Q$ be the natural numbers $\NN = \{0, 1, 2, \cdots \}$.
  The map $f : \NN \pil \NN$ given by $f(n) = 2n$ is an order preserving
  map. If one wants an approximate ``inverse'' to this,
  one should of course send
  $6$ to $3$ and $8$ to $4$. However, for $7$ it is a question whether one
  should choose $3$ of $4$. There are two consistent choices:
  \begin{itemize}
  \item $g^l : \NN \pil \NN$ given by $g^l(m) = \lceil \frac{m}{2} \rceil$,
  \item $g^r : \NN \pil \NN$ given by $g^r(m) = \lfloor \frac{m}{2} \rfloor$,
\end{itemize}
These are both order preserving maps, and are respectively the left
and the right adjoint to the map $f$.
\end{example}

The general categorical notion of this is the following.

\begin{definition} Let $F : \cC \pil \cD$ and $G : \cD \pil \cC$ be
functors. These are {\it adjoint functors}, with $F$ left adjoint and $G$
right adjoint, if there for each $c$ in $\cC$ and $d$ in $\cD$ are
bijections
\[ \Hom_{\cD}(F(c),d) \iso \Hom_{\cC}(c, G(d)). \]
Moreover, these bijections must be natural in the sense that if $c^\prime \pil c$
and $d \pil d^\prime$ are morphisms, the diagrams derived from this
are commutative diagrams.
\end{definition}


\bibliographystyle{amsplain}
\bibliography{biblio}

\end{document}